\DeclareFontFamily{OT2}{cmr}{\hyphenchar\font45 }
\DeclareFontShape{OT2}{cmr}{m}{n}{
<5><6><7><8><9>gen*wncyr
<10><10.95><12><14.4><17.28><20.74><24.88>wncyr10}{}
\DeclareFontShape{OT2}{cmr}{b}{n}{
<5><6><7><8><9>gen*wncyb
<10><10.95><12><14.4><17.28><20.74><24.88>wncyb10}{}
\DeclareMathAlphabet{\mathcyr}{OT2}{cmr}{m}{n}
\DeclareMathAlphabet{\mathcyb}{OT2}{cmr}{b}{n}
\SetMathAlphabet{\mathcyr}{bold}{OT2}{cmr}{b}{n}
\numberwithin{equation}{section}
\newcommand{\shortmathcal}[1]{\@tfor\ch:=#1\do{
\expandafter\edef\csname c\ch\endcsname{\noexpand\mathcal{\ch}}
}}
\newcommand{\shortmathbb}[1]{\@tfor\ch:=#1\do{
\expandafter\edef\csname bb\ch\endcsname{\noexpand\mathbb{\ch}}
}}
\newcommand{\shortmathbf}[1]{\@tfor\ch:=#1\do{
\expandafter\edef\csname b\ch\endcsname{\noexpand\mathbf{\ch}}
}}
\newcommand{\shortboldsymbol}[1]{\@tfor\ch:=#1\do{
\expandafter\edef\csname bs\ch\endcsname{\noexpand\boldsymbol{\ch}}
}}
\newcommand{\shortmathfrak}[1]{\@tfor\ch:=#1\do{
\expandafter\edef\csname f\ch\endcsname{\noexpand\mathfrak{\ch}}}}
\newcommand{\shortmathscr}[1]{\@tfor\ch:=#1\do{
\expandafter\edef\csname s\ch\endcsname{\noexpand\mathscr{\ch}}}}
\newcommand{\shortmathrm}[1]{\@tfor\ch:=#1\do{
\expandafter\edef\csname r\ch\endcsname{\noexpand\mathrm{\ch}}
}}
\DeclareFontFamily{OT2}{cmr}{\hyphenchar\font45 }
\DeclareFontShape{OT2}{cmr}{m}{n}{
<5><6><7><8><9>gen*wncyr
<10><10.95><12><14.4><17.28><20.74><24.88>wncyr10}{}
\DeclareFontShape{OT2}{cmr}{b}{n}{
<5><6><7><8><9>gen*wncyb
<10><10.95><12><14.4><17.28><20.74><24.88>wncyb10}{}
\DeclareMathAlphabet{\mathcyr}{OT2}{cmr}{m}{n}
\DeclareMathAlphabet{\mathcyb}{OT2}{cmr}{b}{n}
\SetMathAlphabet{\mathcyr}{bold}{OT2}{cmr}{b}{n}
\newcommand{\sh}{\mathbin{\mathcyr{sh}}}
\newcommand{\emp}{\varnothing}
\newcommand{\hcA}{\widehat{\mathcal{A}}}
\newcommand{\hcS}{\widehat{\mathcal{S}}}
\newcommand{\hcRS}{\widehat{\mathcal{RS}}}
\newcommand{\jump}[1]{\ensuremath{[\![#1]\!]}}
\newcommand{\dch}{\mathrm{dch}}
\newcommand{\dep}{\mathrm{dep}}
\newcommand{\wt}{\mathrm{wt}}
\newcommand{\vv}[2]{\begin{pmatrix}#1\\ #2\end{pmatrix}}
\newcommand{\Li}{\mathrm{Li}}
\renewcommand{\Re}{\mathrm{Re}}
\theoremstyle{definition}
\newtheorem{theorem}{Theorem}[section]
\newtheorem{proposition}[theorem]{Proposition}
\newtheorem{lemma}[theorem]{Lemma}
\newtheorem{corollary}[theorem]{Corollary}
\newtheorem{definition}[theorem]{Definition}
\theoremstyle{remark}
\newtheorem{remark}[theorem]{Remark}
\title{Duality on symmetric multiple polylogarithms}
\author{Hanamichi Kawamura}
\address[Hanamichi Kawamura]{Department of Mathematics, Faculty of Science Division I, Tokyo University of Science, 1-3 Kagurazaka, Shinjuku-ku, Tokyo, 162-8601, Japan}
\email{1121026@ed.tus.ac.jp}
\subjclass[2020]{11M32.}
\keywords{iterated integrals, duality, symmetric multiple polylogarithms}
\begin{document}
\begin{abstract}
    There are three kinds of multiple polylogarithms; complex, finite and symmetric.
    The dualities for the complex and finite cases are known.
    In this paper, we present proofs of them via iterated integrals and its symmetric counterpart by a similar method.
\end{abstract}
\maketitle
\section{Introduction}\label{sec:intro}
\subsection{Notations and definitions}
Put $\cI\coloneqq\{\emp\}\sqcup\bigsqcup_{r=1}^{\infty}(\bbZ_{\ge 1})^{r}$ and call its element an \emph{index}.
Define $\dep\colon\cI\to\bbZ_{\ge 0}$ (resp.~$\wt\colon\cI\to\bbZ_{\ge 0}$) to be a map giving the number (resp.~sum) of components.
Put $\dep(\emp)=\wt(\emp)=0$.
We say that an index $\bk$ is \emph{admissible} if $\bk=\emp$ or its last component exceeds $1$.
We also define its variant with variables:
\[\cV\coloneqq\left\{\vv{\emp}{\emp}\right\}\cup\left\{\vv{z_{1},\ldots,z_{r}}{\bk}\left|\begin{array}{c}\bk\in\cI,\\r=\dep(\bk)\ge 1,\\z_{1},\ldots,z_{r}\in\bbC,\\|z_{1}|,\ldots,|z_{r}|\le 1,\\ (k_{r},z_{r})\neq 1\end{array}\right.\right\}.\]
Finally, we denote by $\{X\}^{n}$ the tuple $(\underbrace{X,\ldots,X}_{n})$.
\subsection{Complex MPLs}
The \emph{multiple polylogarithm} (\emph{MPL}) is the map $\Li^{\sh}\colon\cV\to\bbC$ defined as
\[\Li^{\sh}\vv{z_{1},\ldots,z_{r}}{k_{1},\ldots,k_{r}}\coloneqq\sum_{0=n_{0}<n_{1}<\cdots<n_{r}}\frac{z_{1}^{n_{1}-n_{0}}\cdots z_{r}^{n_{r}-n_{r-1}}}{n_{1}^{k_{1}}\cdots n_{r}^{k_{r}}},\]
for $k_{1},\ldots,k_{r}\in\bbZ_{\ge 1}$ and $z_{1},\ldots,z_{r}\in\bbD\coloneqq\{z\mid |z|\le 1\}\subseteq\bbC$.
Put $\Li^{\sh}\vv{\emp}{\emp}\coloneqq 1$ for the empty tuple $\emp$.
This is a generalization of the \emph{multiple zeta value} (\emph{MZV}):
\[\zeta(k_{1},\ldots,k_{r})\coloneqq\sum_{0<n_{1}<\cdots<n_{r}}\frac{1}{n_{1}^{k_{1}}\cdots n_{r}^{k_{r}}}=\Li^{\sh}\vv{1,\ldots,1}{k_{1},\ldots,k_{r}}\qquad (k_{1},\ldots,k_{r-1}\ge 1,~k_{r}\ge 2).\]
An index $\bk=(k_{1},\ldots,k_{r})\in(\bbZ_{\ge 1})^{r}$ satisfying the above condition (i.e., $r=0$ or the last component exceeds $1$) is called an \emph{admissible index}.
It is known that MZVs satisfy the \emph{duality}
\begin{equation}\label{eq:mzv_duality}
    \zeta(\bk)=\zeta(\bk^{\dagger}),
\end{equation}
where the dual $\bk^{\dagger}$ of an admissible index $\bk$ is defined as
\[\left(\{1\}^{a_{1}-1},b_{1}+1,\ldots,\{1\}^{a_{h}-1},b_{h}+1\right)^{\dagger}=\left(\{1\}^{b_{h}-1},a_{h}+1,\ldots,\{1\}^{b_{1}-1},a_{1}+1\right),\]
for $h,a_{1},\ldots,a_{h},b_{1},\ldots,b_{h}\in\bbZ_{\ge 1}$.\\

As a generalization of \eqref{eq:mzv_duality} for MPLs, the following theorem holds.
\begin{theorem}[{\cite[\S6.1]{bbbl01}}, {\cite[Theorem 3.4]{kms22}}]\label{thm:mpl_duality}
    Let $d,a_{1},\ldots,a_{d-1},b_{1},\ldots,b_{d-1}$ be positive integers, $\bl_{1},\ldots,\bl_{d}$ admissible indices and $w_{1},\ldots,w_{d-1}$ complex numbers.
    Assume
    \begin{enumerate}
        \item $\Re(w_{i})\le 1/2$ and $|w_{i}|\le 1$ for $1\le i<d$,
        \item $\Re(w_{1})\neq 1/2$ if $\bl_{1}=\emp$ and $a_{1}=1$,
        \item $|w_{d-1}|\neq 1$ if $\bl_{d}=\emp$ and $b_{d-1}=1$.
    \end{enumerate}
    Put $r_{i}\coloneqq\dep(\bl_{i})$, $s_{i}\coloneqq\dep((\bl_{i})^{\dagger})$,
    \[\vv{\bz}{\bk}=\vv{\{1\}^{r_{1}+a_{1}-1},w_{1},\ldots,\{1\}^{r_{d-1}+a_{d-1}-1},w_{d-1},\{1\}^{r_{d}}}{\bl_{1},\{1\}^{a_{1}-1},b_{1},\ldots,\bl_{d-1},\{1\}^{a_{d-1}-1},b_{d-1},\bl_{d}}\]
    and
    \[\vv{\bz}{\bk}^{\dagger}=\vv{\{1\}^{s_{d}+b_{d-1}-1},\frac{w_{d-1}}{w_{d-1}-1},\ldots,\{1\}^{s_{2}+b_{1}-1},\frac{w_{1}}{w_{1}-1},\{1\}^{s_{1}}}{(\bl_{d})^{\dagger},\{1\}^{b_{d-1}-1},a_{d-1},\ldots,(\bl_{2})^{\dagger},\{1\}^{b_{1}-1},a_{1},(\bl_{1})^{\dagger}}.\]
    Then we have
    \[\Li^{\sh}\vv{\bz}{\bk}=(-1)^{d-1}\Li^{\sh}\left(\vv{\bz}{\bk}^{\dagger}\right).\]
\end{theorem}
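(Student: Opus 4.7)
The plan is to represent both sides of the identity as iterated integrals on $[0,1]$ and to compare them via the involution $t\mapsto 1-t$.

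First, I would invoke the standard iterated integral representation of $\Li^{\sh}$ on $\cV$: for $\vv{\bz}{\bk}$ with admissible $\bk=(k_1,\ldots,k_r)$, one has
\[
\Li^{\sh}\vv{z_1,\ldots,z_r}{k_1,\ldots,k_r}=\int_{0}^{1}\omega_{\gamma_1}\cdots\omega_{\gamma_N},\qquad N=k_1+\cdots+k_r,
\]
where the letters are drawn from the $1$-forms $\omega_0(t)=dt/t$ and $\omega_c(t)=dt/(c-t)$ for $c\in\bbC^{\times}$, and the word is obtained by concatenating, for each $j=1,\ldots,r$, a block of the shape $\omega_0^{k_j-1}\omega_{c_j}$ whose parameter $c_j$ is determined from the $z_i$'s by the chosen convention. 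Applied to the specific $\vv{\bz}{\bk}$ of the theorem, every argument equal to $1$ contributes a letter with $c_j\in\{0,1\}$, while each of the $d-1$ arguments $w_1,\ldots,w_{d-1}$ contributes a single letter with $c_j\notin\{0,1\}$. Consequently the word decomposes as $d$ contiguous \emph{blocks} $B_1,\ldots,B_d$ of $\{\omega_0,\omega_1\}$-letters separated by $d-1$ non-trivial letters $L_1,\ldots,L_{d-1}$, each $L_i$ of the form $\omega_{1/w_i}$.

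Next, I apply the substitution $u_i=1-t_{N+1-i}$, which preserves the integration simplex and reverses the order of the letters. The induced pullback on $1$-forms is $\omega_0\leftrightarrow\omega_1$ (with no sign) and $\omega_c\mapsto -\omega_{1-c}$ for $c\notin\{0,1\}$ (with a sign). Consequently, the iterated integral is transformed into that of the reversed word, in which each block $B_i$ is \emph{mirrored} (reversed with $\omega_0\leftrightarrow\omega_1$) and each $L_i=\omega_{1/w_i}$ is replaced by $\omega_{(w_i-1)/w_i}=\omega_{1/w_i'}$ with $w_i'=w_i/(w_i-1)$, up to the global sign $(-1)^{d-1}$ obtained by collecting one $(-1)$ from each of the $d-1$ non-trivial letters.

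It remains to identify the reversed, sign-adjusted word with the iterated integral representation of $\Li^{\sh}(\vv{\bz}{\bk}^{\dagger})$. Block by block, the mirroring operation is precisely the word-level realisation of classical MZV duality: the admissible word for $\bl_i$ padded by $\{1\}^{a_i-1},b_i$ becomes the admissible word for $(\bl_i)^{\dagger}$ padded by $\{1\}^{b_i-1},a_i$. Reassembling the mirrored blocks in reversed order, interlaced with the transformed letters $\omega_{1/w_i'}$, then reproduces exactly the word for $\vv{\bz}{\bk}^{\dagger}$ displayed in the statement, and multiplied by the accumulated $(-1)^{d-1}$ this is the desired identity.

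The main technical obstacle I expect is the careful book-keeping of positions and parameters across blocks: one must verify that the mirrored blocks reassemble in precisely the order $(\bl_d)^{\dagger},\{1\}^{b_{d-1}-1},a_{d-1},\ldots,(\bl_1)^{\dagger}$ dictated by the statement, and that the padding arguments $\{1\}^{s_i+b_{i-1}-1}$ are slotted correctly around the transformed letters. A secondary technical point is the convergence of the iterated integrals under the hypotheses: condition (1) guarantees $|w_i/(w_i-1)|\le 1$ so that $\vv{\bz}{\bk}^{\dagger}$ lies in $\cV$, while conditions (2) and (3) precisely exclude the marginal cases in which the first or last letter of either word would be singular at $t=0$ or $t=1$.
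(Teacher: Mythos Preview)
Your proposal is correct and follows essentially the same route as the paper: represent $\Li^{\sh}$ as an iterated integral along $\dch$, apply the involution $t\mapsto 1-t$ (which swaps $\omega_0\leftrightarrow\omega_1$, sends each $\omega_{1/w_i}$ to $\omega_{(w_i-1)/w_i}$, and reverses the word), and then read off the dual $\vv{\bz}{\bk}^{\dagger}$ with the sign $(-1)^{d-1}$. The only difference is cosmetic---the paper uses the forms $e_a=dt/(t-a)$ and phrases the substitution via the reversal formula and \eqref{eq:transformation_dual}, so the $(-1)^{d-1}$ emerges from the combination $(-1)^{\dep(\bk)+\wt(\bk)+\dep(\bk^{\dagger})}$ rather than from the $d-1$ non-trivial letters directly, but the computation is the same.
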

Note that \eqref{eq:mzv_duality} is recovered by $d=1$ in Theorem \ref{thm:mpl_duality}.
\subsection{Finite MPLs}
For $N\in\bbZ_{\ge 1}$, define the truncated series of the MPL
\[\Li_{<N}^{\sh}\vv{z_{1},\ldots,z_{r}}{k_{1},\ldots,k_{r}}\coloneqq\sum_{0=n_{0}<n_{1}<\cdots<n_{r}<N}\frac{z_{1}^{n_{1}-n_{0}}\cdots z_{r}^{n_{r}-n_{r-1}}}{n_{1}^{k_{1}}\cdots n_{r}^{k_{r}}}\]
and its \emph{star} variant
\[\Li_{<N}^{\sh,\star}\vv{z_{1},\ldots,z_{r}}{k_{1},\ldots,k_{r}}\coloneqq\sum_{0=n_{0}<n_{1}\le\cdots\le n_{r}<N}\frac{z_{1}^{n_{1}-n_{0}}\cdots z_{r}^{n_{r}-n_{r-1}}}{n_{1}^{k_{1}}\cdots n_{r}^{k_{r}}},\]
as elements of $\bbQ[z_{1},\ldots,z_{r}]$.
Note that the star value is written as a sum of non-star values:
\begin{equation}\label{eq:star_nonstar}
    \Li_{<N}^{\sh,\star}\vv{z_{1},\ldots,z_{r}}{k_{1},\ldots,k_{r}}=\sum_{d=0}^{r-1}\sum_{0=i_{0}<\cdots<i_{d+1}=r}\Li^{\sh}_{<N}\vv{z_{i_{0}+1},\ldots,z_{i_{d}+1}}{k_{i_{0}+1}+\ldots+k_{i_{1}},\ldots,k_{i_{d}+1}+\cdots+k_{i_{d+1}}}.
\end{equation}
When we consider $z_{1},\ldots,z_{r}$ as complex numbers with suitable convergence conditions, we have
\[\lim_{N\to\infty}\Li_{<N}^{\sh}\vv{z_{1},\ldots,z_{r}}{k_{1},\ldots,k_{r}}=\Li^{\sh}\vv{z_{1},\ldots,z_{r}}{k_{1},\ldots,k_{r}}.\] 
Mimicking this, we also define the star variant of the MPL as
\[\Li^{\sh,\star}\vv{z_{1},\ldots,z_{r}}{k_{1},\ldots,k_{r}}\coloneqq\lim_{N\to\infty}\Li_{<N}^{\sh,\star}\vv{z_{1},\ldots,z_{r}}{k_{1},\ldots,k_{r}}.\]
\begin{remark}\label{rem:generating_function}
    The generating function of $\Li_{<N}^{\sh,\bullet}$ about $N$ is explained in terms of $\Li^{\sh,\bullet}$.
    Indeed, we have
    \begin{align}
        \sum_{N=1}^{\infty}\Li_{<N}^{\sh}\vv{z_{1},\ldots,z_{r}}{k_{1},\ldots,k_{r}}X^{N}
        &=\sum_{\substack{0<n_{1}<\cdots<n_{r}\\ N\ge 1}}\frac{z_{1}^{n_{1}-n_{0}}\cdots z_{r}^{n_{r}-n_{r-1}}}{n_{1}^{k_{1}}\cdots n_{r}^{k_{r}}}X^{n_{r}+N}\\
        &=\frac{X}{1-X}\sum_{0<n_{1}<\cdots<n_{r}}\frac{(Xz_{1})^{n_{1}-n_{0}}\cdots (Xz_{r})^{n_{r}-n_{r-1}}}{n_{1}^{k_{1}}\cdots n_{r}^{k_{r}}}\\
        &=\frac{X}{1-X}\Li^{\sh}\vv{Xz_{1},\ldots,Xz_{r}}{k_{1},\ldots,k_{r}},
    \end{align}
    and similarly
    \[\sum_{N=1}^{\infty}\Li_{<N}^{\sh,\star}\vv{z_{1},\ldots,z_{r}}{k_{1},\ldots,k_{r}}X^{N}=\frac{X}{1-X}\Li^{\sh,\star}\vv{Xz_{1},\ldots,Xz_{r}}{k_{1},\ldots,k_{r}}\]
    holds.
\end{remark}
Let us construct an algebra where we consider $\Li_{<p}^{\sh}$ and $\Li_{<p}^{\sh,\star}$ for a prime number $p$.
For a positive integer $n$, a commutative ring $R$, we put
\[\cA_{n,R}\coloneqq\left.\left(\prod_{p\in\cP}R/p^{n}R\right)\right/\left(\bigoplus_{p\in\cP}R/p^{n}R\right),\]
where $\cP$ is the set of all prime numbers.
Since $\{\cA_{n,R}\}_{n}$ forms a projective system of a (discrete) topological ring together with natural transitions, we can take the projective limit $\hcA_{R}\coloneqq\varprojlim_{n}\cA_{n,R}$.
Note that there is a natural ring homomorphism
\[\pi_{R}\colon\prod_{p\in\cP}\varprojlim_{n}R/p^{n}R\to\hcA_{R},\]
obtained in \cite[Lemma 2.3]{seki19}.
\begin{definition}
    For an index $\bk=(k_{1},\ldots,k_{r})$, we define the \emph{$\bsp$-adic finite multiple polylogarithm} ($\bsp$-adic FMPL) by
    \[\pounds_{\hcA}^{\sh,\bullet}\vv{z_{1},\ldots,z_{r}}{\bk}\coloneqq\pi_{\bbZ[z_{1},\ldots,z_{r}]}\left(\left(\Li_{<p}^{\bullet}\vv{z_{1},\ldots,z_{r}}{\bk}\right)_{p\in\cP}\right)\qquad (\bullet\in\{\emp,\star\}),\]
    as an element of $\hcA_{\bbZ[z_{1},\ldots,z_{r}]}$.
\end{definition}
The following theorem seems to be a finite analog of Theorem \ref{thm:mpl_duality}.
\begin{theorem}[{\cite[Theorem 3.4]{seki19}}]\label{thm:fmpl_duality}
    Put
    \[\cL^{\star}_{\hcA}\vv{z_{1},\ldots,z_{r}}{\bk}\coloneqq\sum_{n=0}^{\infty}\left(\pounds_{\hcA}^{\sh,\star}\vv{z_{1},\ldots,z_{r},\{1\}^{n}}{\bk,\{1\}^{n}}-\frac{1}{2}\pounds_{\hcA}^{\sh,\star}\vv{1,z_{2},\ldots,z_{r},\{1\}^{n}}{\bk,\{1\}^{n}}\right)\bsp^{n}.\]
    Then, for indices $\bk_{1},\ldots,\bk_{d}$ ($d\ge 1$), we have
    \[\cL^{\star}_{\hcA}\vv{z_{1},\{1\}^{\dep(\bk_{1})-1},\ldots,z_{d},\{1\}^{\dep(\bk_{d})-1}}{\bk_{1},\ldots,\bk_{d}}=\cL^{\star}_{\hcA}\vv{1-z_{1},\{1\}^{\dep((\bk_{1})^{\vee})-1},\ldots,1-z_{d},\{1\}^{\dep((\bk_{d})^{\vee})-1}}{(\bk_{1})^{\vee},\ldots,(\bk_{d})^{\vee}},\]
    in $\hcA_{\bbZ[z_{1},\ldots,z_{d}]}$.
    Here, for an index $\bk=(k_{1},\ldots,k_{r})$, we denote by $\bk^{\vee}$ the index $(l_{1},\ldots,l_{s})$ satisfying
    \[(l_{s},\ldots,l_{2},l_{1}+1)=(k_{1},\ldots,k_{r}+1)^{\dagger}.\]
\end{theorem}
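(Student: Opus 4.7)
The strategy is to adapt the iterated-integral proof of Theorem \ref{thm:mpl_duality} to the $\bsp$-adic finite setting, replacing iterated integrals by finite iterated sums and the substitution $t\mapsto 1-t$ by the reflection of the summation indices $n_{i}\mapsto p-n_{i}$. The first step is to express each truncation $\Li_{<p}^{\sh,\star}\vv{z_{1},\ldots,z_{r},\{1\}^{n}}{\bk,\{1\}^{n}}$ as its defining finite iterated sum over $0<n_{1}\le\cdots\le n_{r+n}<p$, multiply by $\bsp^{n}$, and sum over $n\ge 0$. The appended tails of $\{1\}$'s, summed geometrically against $\bsp$, should collapse into a single closed-form iterated-sum expression in $\hcA_{\bbZ[z_{1},\ldots,z_{r}]}$, playing exactly the role of an iterated integral $\int\omega_{\bullet}\cdots\omega_{\bullet}$ in the complex proof; the regularisation $-\tfrac12\pounds^{\sh,\star}_{\hcA}\vv{1,z_{2},\ldots}{\bk,\{1\}^{n}}$ is the discrete counterpart of the shuffle-regularisation used to neutralise the boundary divergence at $z_{1}=1$.

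\textbf{Reflection and dualisation.} With this representation in hand, I would apply the reflection $n_{i}\mapsto p-n_{i}$ combined with a reversal of the tuple. Using the geometric-series identity
\[
\frac{1}{(p-n)^{k}}=(-1)^{k}\sum_{j\ge 0}\binom{k+j-1}{j}\frac{p^{j}}{n^{k+j}}
\]
valid in $\bbZ_{(p)}$, together with the binomial expansion $z^{p-n}=\sum_{m}\binom{p-n}{m}(-1)^{m}(1-z)^{m}$, the reflected left-hand side is converted into a finite iterated sum in the variables $1-z_{i}$, whose index data is reshuffled according to the block-reversal dictionary $(l_{s},\ldots,l_{1}+1)=(k_{1},\ldots,k_{r}+1)^{\dagger}$ built into the definition of $\bk^{\vee}$. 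Matching this reshuffling against the structure of $\cL^{\star}_{\hcA}$ on the right-hand side should then give the claimed identity term by term in $\bsp$. The fact that the star-inequalities $n_{i}\le n_{i+1}$ are preserved under reflection (they become $p-n_{i}\ge p-n_{i+1}$, then reverse to $\le$) ensures that the star-structure is self-dual, consistent with both sides of the theorem being $\pounds^{\sh,\star}$-expressions.

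\textbf{Main obstacle.} The principal difficulty is bookkeeping across two interacting filtrations: the external filtration by powers of $\bsp$ coming from the defining sum of $\cL^{\star}_{\hcA}$, and the internal $p$-adic filtration defining $\hcA$. The reflection mixes the two, since each $(p-n)^{-k}$ is a genuine power series in $p/n$, not a scalar; the higher-order corrections must therefore reorganise themselves into precisely the generating series that the right-hand side demands, and verifying this requires a careful order-of-summation argument in $\hcA_{\bbZ[z_{1},\ldots,z_{d}]}$. A second delicate point is earning the coefficient $\tfrac12$: the boundary term at $n_{1}=1$ (which maps to $n_{1}=p-1$ under reflection) and the symmetric boundary at $n_{r+n}=p-1$ must contribute equally, so that subtracting half of one of them produces a reflection-symmetric expression. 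Ensuring that this balancing matches, on the dual side, the regularisation at $z_{1}\mapsto 1-z_{1}=0$ is where the combinatorics of the proof concentrates and is, I expect, the technical heart of the argument.
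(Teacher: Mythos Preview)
Your plan has a genuine gap: the reflection $n_{i}\mapsto p-n_{i}$ is the discrete analogue of \emph{reversing the path}, not of the substitution $t\mapsto 1-t$. Applied to a truncated star sum it produces, to leading order in $p$,
\[
\Li_{<p}^{\sh,\star}\vv{z_{1},\ldots,z_{r}}{k_{1},\ldots,k_{r}}\equiv(-1)^{\wt(\bk)}\Li_{<p}^{\sh,\star}\vv{\cdots}{k_{r},\ldots,k_{1}}\pmod{p},
\]
i.e.\ the reversal relation, not the duality. In the iterated-integral picture the map $\bk\mapsto\bk^{\vee}$ arises because $t\mapsto 1-t$ swaps the $1$-forms $e_{0}=dt/t$ and $e_{1}=dt/(t-1)$, thereby exchanging the roles of ``increment the last entry'' and ``append a new entry $1$.'' In the nested sum there is no pair of objects on which reflection acts this way: the $k_{i}$ sit as exponents of $1/n_{i}$, and reflection only produces $(p-n_{i})^{-k_{i}}$. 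Expanding this $p$-adically, as you propose, shifts exponents upward but never converts a large $k_{i}$ into a run of $1$'s. The same objection applies to your binomial expansion of $z^{p-n}$: it introduces an auxiliary summation but does not rearrange the exponent pattern into the $\vee$-dual one. So as written, your mechanism cannot generate the right-hand side.

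The paper takes a different route and does not work directly in the finite setting at all. It first proves the Sakugawa--Seki identity \eqref{eq:ss}---a polynomial identity in $\bbQ[z_{1},\ldots,z_{r}]$ for each fixed $N$, whose right-hand side already carries the dual index pattern---by passing to generating functions in $N$ (Remark~\ref{rem:generating_function}), writing the resulting complex star MPL as an iterated integral along $\dch_{\infty,1}$ (Lemma~\ref{lem:star_mpl_ii}), and applying the M\"obius substitution $t\mapsto(X-t)/(X-1)$. The identity \eqref{eq:ss} is what encodes the index duality; only afterwards does one specialise $N=p$ and expand the weight $(-1)^{n_{r}}\binom{p-1}{n_{r}}$ $p$-adically---this, rather than any reflection of the $n_{i}$, is where the $\bsp$-series enters---to obtain Theorem~\ref{thm:fmpl_duality} via Seki's original algebraic argument. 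If you want a self-contained proof along your lines, the missing ingredient is precisely an identity of Sakugawa--Seki type.
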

The natural surjection $\bbZ[z_{1},\ldots,z_{r}]\to\bbZ$; $z_{i}\mapsto 1$ induces the map $\hcA_{\bbZ[z_{1},\ldots,z_{r}]}\to\hcA\coloneqq\hcA_{\bbZ}$ and this construction is compatible with $\pi_{R}$'s.
Through this assignment, we define the \emph{$\bsp$-adic finite multiple zeta values} (\emph{$\bsp$-adic FMZV}) as follows:
\[\zeta_{\hcA}^{\bullet}(\bk)\coloneqq\pi_{\bbZ}\left(\left(\Li^{\sh,\bullet}_{<p}\vv{\{1\}^{\dep(\bk)}}{\bk}\right)_{p\in\cP}\right)\qquad (\bullet\in\{\emp,\star\}).\]
This value satisfies the \emph{$\bsp$-adic duality}
\begin{equation}\label{eq:fmzv_duality}
    \sum_{n=0}^{\infty}\zeta_{\hcA}^{\star}(\bk,\{1\}^{n})\bsp^{n}=-\sum_{n=0}^{\infty}\zeta_{\hcA}^{\star}(\bk^{\vee},\{1\}^{n})\bsp^{n},
\end{equation}
which immediately follows from Theorem \ref{thm:fmpl_duality}.
\subsection{Symmetric MPLs}
The refined Kaneko--Zagier conjecture (for example, see \cite[Conjecture 4.3]{osy21}) asserts that there is a strong relationship between the value $\zeta_{\hcA}(\bk)$ appearing in the previous subsection and the \emph{$t$-adic symmetric multiple zeta value} (\emph{$t$-adic SMZV}) defined below.
Put
\begin{multline}
    \zeta_{\hcS}^{\sh}(k_{1},\ldots,k_{r})\coloneqq\sum_{i=0}^{r}(-1)^{k_{i+1}+\cdots+k_{r}}\zeta^{\sh}(k_{1},\ldots,k_{i})\\
    \cdot\sum_{n_{i+1},\ldots,n_{r}\ge 0}\left(\prod_{j=i+1}^{r}\binom{k_{j}+n_{j}-1}{n_{j}}\right)\zeta^{\sh}(k_{r}+n_{r},\ldots,k_{i+1}+n_{i+1})t^{n_{i+1}+\cdots+n_{r}}\in\bbR\jump{t},
\end{multline}
where $\zeta^{\sh}(\bl)$ is defined as follows: there uniquely exists $P(x)\in\bbR[x]$ such that
\[\Li^{\sh}\vv{1-z,\ldots,1-z}{\bl}=P(\log z)+O(z(\log z)^{J}),\qquad\text{as }z\to +0,\]
for some $J>0$, and we put $\zeta^{\sh}(\bl)\coloneqq P(0)$.
Actually $\zeta_{\hcS}^{\sh}(\bk)$ is in the power series ring $\cZ\jump{t}$ over $\cZ$ ($=$ the $\bbQ$-algebra generated by all MZVs).
This value modulo $\zeta(2)$, i.e., $\zeta_{\hcS}(\bk)\coloneqq\zeta_{\hcS}^{\sh}(\bk)\in\cZ/\zeta(2)\cZ$, is called the $t$-adic SMZV.
Immitating \eqref{eq:star_nonstar}, we put
\[\zeta^{\star}_{\hcS}(k_{1},\ldots,k_{r})\coloneqq\sum_{d=0}^{r-1}\sum_{0=i_{0}<\cdots<i_{d+1}=r}\zeta_{\hcS}(k_{i_{0}+1}+\cdots+k_{i_{1}},\ldots,k_{i_{d}+1}+\cdots+k_{i_{d+1}}).\]
One of the evidences of the (refined) Kaneko--Zagier conjecture is the $t$-adic duality:
\begin{theorem}[{\cite[Corollary 5.5]{tt23}}]\label{thm:smzv_duality}
    For an index $\bk$, we have
    \[\sum_{n=0}^{\infty}\zeta_{\hcS}^{\star}(\bk,\{1\}^{n})t^{n}=-\sum_{n=0}^{\infty}\zeta_{\hcS}^{\star}(\bk^{\vee},\{1\}^{n})t^{n}.\]
\end{theorem}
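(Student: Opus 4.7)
The plan is to deduce Theorem \ref{thm:smzv_duality} as a special case of a symmetric analog of Theorem \ref{thm:fmpl_duality}, proved by the same iterated-integral technique the paper employs in the complex and finite cases. First I would introduce a symmetric MPL with variables, $\pounds_{\hcS}^{\sh,\bullet}\vv{z_{1},\ldots,z_{r}}{\bk}\in\cZ/\zeta(2)\cZ$, obtained by replacing every $\zeta^{\sh}(\cdots)$ appearing in the definition of $\zeta_{\hcS}^{\sh}$ with the corresponding shuffle-regularized complex MPL in the variables $z_{1},\ldots,z_{r}$, and then form the generating series
\[
\cL^{\star}_{\hcS}\vv{z_{1},\ldots,z_{r}}{\bk}\coloneqq\sum_{n=0}^{\infty}\left(\pounds_{\hcS}^{\sh,\star}\vv{z_{1},\ldots,z_{r},\{1\}^{n}}{\bk,\{1\}^{n}}-\frac{1}{2}\pounds_{\hcS}^{\sh,\star}\vv{1,z_{2},\ldots,z_{r},\{1\}^{n}}{\bk,\{1\}^{n}}\right)t^{n}
\]
in $(\cZ/\zeta(2)\cZ)\jump{t}$, in exact parallel with the construction of $\cL^{\star}_{\hcA}$.

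The main step is to establish the symmetric MPL duality
\[
\cL^{\star}_{\hcS}\vv{z_{1},\{1\}^{\dep(\bk_{1})-1},\ldots,z_{d},\{1\}^{\dep(\bk_{d})-1}}{\bk_{1},\ldots,\bk_{d}}=\cL^{\star}_{\hcS}\vv{1-z_{1},\{1\}^{\dep((\bk_{1})^{\vee})-1},\ldots,1-z_{d},\{1\}^{\dep((\bk_{d})^{\vee})-1}}{(\bk_{1})^{\vee},\ldots,(\bk_{d})^{\vee}}
\]
by applying the substitution $x\mapsto 1-x$ underlying Theorems \ref{thm:mpl_duality} and \ref{thm:fmpl_duality} term by term to each regularized product in the expansion of $\pounds_{\hcS}^{\sh,\star}$; the alternating signs $(-1)^{k_{i+1}+\cdots+k_{r}}$ built into $\zeta_{\hcS}^{\sh}$ should combine with the $(-1)^{d-1}$ coming from Theorem \ref{thm:mpl_duality}, after summation over splittings and reindexing via $\dagger\leftrightarrow\vee$, to produce exactly the symmetric MPL with dualized indices and shifted variables $1-z_{i}$. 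Specializing to $d=1$, $\bk_{1}=\bk$, $z_{1}=1$ then recovers Theorem \ref{thm:smzv_duality}: on the left both summands coincide and yield $\tfrac{1}{2}\sum_{n\ge 0}\zeta^{\star}_{\hcS}(\bk,\{1\}^{n})t^{n}$, while on the right the first summand vanishes at $1-z_{1}=0$ (the star constraint $n_{1}\ge 1$ forces $0^{n_{1}}=0$ in every surviving term), leaving $-\tfrac{1}{2}\sum_{n\ge 0}\zeta^{\star}_{\hcS}(\bk^{\vee},\{1\}^{n})t^{n}$.

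The main obstacle is regularization. Theorem \ref{thm:mpl_duality} requires conditions such as $|w_{i}|\le 1$ and $\Re w_{i}\le 1/2$ that rule out the boundary value $w=1$ needed to recover SMZVs, and the substitutions $z_{i}\mapsto 1-z_{i}$ interact delicately with the logarithmic singularities at $z=0$ and $z=1$ that the construction $\zeta^{\sh}(\bl)=P(0)$ is designed to kill. The technical heart of the argument is therefore to verify that $x\mapsto 1-x$ commutes with the polynomial-part regularization $\Reg_{z\to 0}$ hidden inside $\pounds_{\hcS}^{\sh,\star}$ and, simultaneously, with the $t$-adic bookkeeping introduced by appending $\{1\}^{n}$; once this compatibility is pinned down, the symmetric duality will follow from the iterated-integral computation in the same formal shape as its complex and finite counterparts.
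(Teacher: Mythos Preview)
Your high-level architecture matches the paper exactly: prove a symmetric MPL duality of the shape of Theorem~\ref{thm:fmpl_duality} (this is the paper's Theorem~\ref{thm:main}) and then specialize to $d=1$, $z_{1}=1$; your endgame computation that the left side collapses to $\tfrac12\sum_{n}\zeta_{\hcS}^{\star}(\bk,\{1\}^{n})t^{n}$ and the right side to $-\tfrac12\sum_{n}\zeta_{\hcS}^{\star}(\bk^{\vee},\{1\}^{n})t^{n}$ is correct.

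The gap is in the main step. Applying the substitution $t\mapsto 1-t$ ``term by term'' to each factor $\Li^{\sh}(\cdots)$ in the defining sum of $\pounds_{\hcS}^{\sh}$ cannot produce the identity you want: on an integral along $\dch$ that substitution yields the $\dagger$-dual with variables $w\mapsto w/(w-1)$ (this is literally Theorem~\ref{thm:mpl_duality}), not the $\vee$-dual with variables $z\mapsto 1-z$, and no amount of sign bookkeeping over splittings will convert one into the other. What the paper does instead is first recognise the \emph{entire} symmetric MPL as a single iterated integral along the loop $\beta=\dch\cdot c\cdot\dch^{-1}$ (this is the content of Proposition~\ref{prop:k} and the definition~\eqref{eq:rsmpl}), then pass via $t\mapsto 1/t$ and a homotopy to a single integral along a path $\eta_{u}$ from $\infty$ to $\infty$ (Proposition~\ref{prop:star_smpl_ii}). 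Along $\eta_{u}$ the two $t$-expansions---the one built into $\zeta_{\hcS}^{\sh}$ and the one coming from appending $\{1\}^{n}$---merge into the closed form $f_{1/u,1/u,t}-f_{0,0,t}$ (Corollary~\ref{cor:star_smpl_ii}), and the $\tfrac12$-correction turns the leading letter into $e_{z_{1}/u}-\tfrac12(e_{1/u}+e_{0})$. Only in this packaging does the involution $z\mapsto 1/u-z$ act cleanly: it reverses $\eta_{u}$, swaps $e_{0}\leftrightarrow e_{1/u}$ and $f_{0,0,t}\leftrightarrow f_{1/u,1/u,t}$, and sends $e_{z_{j}/u}\mapsto e_{(1-z_{j})/u}$, which is precisely the $\vee$-duality with $z\mapsto 1-z$. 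The ``regularization obstacle'' you flag is real, but it is resolved not by checking compatibility of $\Reg_{z\to0}$ with $1-x$ term by term, but by this global repackaging as one integral on a path with both endpoints at $\infty$; that is the missing idea.
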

As far as the author knows, any correspondence generalizing the (refined) Kaneko--Zagier conjecture for (finite and symmetric) multiple polylogarithms is not explicitly conjectured. However, we can define the $t$-adic symmetric multiple polylogarithm
\begin{multline}
    \pounds_{\hcS,\alpha}^{\sh}\vv{z_{1},\ldots,z_{r}}{k_{1},\ldots,k_{r}}\coloneqq\sum_{\substack{0\le i\le r\\ z_{i+1}\neq 0}}(-1)^{k_{i+1}+\cdots+k_{r}}z_{i+1}^{\alpha}\Li^{\sh}\vv{z_{1}/z_{i+1},\ldots,z_{i}/z_{i+1}}{k_{1},\ldots,k_{i}}\\
    \cdot\sum_{n_{i+1},\ldots,n_{r}\ge 0}\left(\prod_{j=i+1}^{r}\binom{k_{j}+n_{j}-1}{n_{j}}\right)\Li^{\sh}\vv{z_{r+1}/z_{i+1},\ldots,z_{i+2}/z_{i+1}}{k_{r}+n_{r},\ldots,k_{i+1}+n_{i+1}}t^{n_{i+1}+\cdots+n_{r}},
\end{multline}
where $\alpha\in\bbZ$ and $z_{r+1}\coloneqq 1$.
The variables $z_{1},\ldots,z_{r}$ are taken from a set $Q$ satisfying some technical conditions (see the beginning of \S\ref{sec:main}).
This quantity $\pounds_{\hcS,\alpha}^{\sh}$ lives in the quotient $(\cZ_{Q}/2\pi i\cZ_{Q})\jump{t}$ of the power series ring over a certain ring $\cZ_{Q}/2\pi i\cZ_{Q}$ associated with $Q$.
\subsection{Purpose of this paper}
This paper is devoted to give a symmetric counterpart of Theorem \ref{thm:fmpl_duality}.
Define $\pounds_{\hcS,\alpha}^{\sh,\star}$ by mimicking \eqref{eq:star_nonstar} and put
\[\cL_{\hcS,\alpha}^{\star}\vv{z_{1},\ldots,z_{r}}{k_{1},\ldots,k_{r}}\coloneqq\sum_{n=0}^{\infty}\left(\pounds_{\hcS,\alpha}^{\sh,\star}\vv{z_{1},\ldots,z_{r},\{1\}^{n}}{\bk,\{1\}^{n}}-\frac{1}{2}\pounds_{\hcS,\alpha}^{\sh,\star}\vv{1,z_{2},\ldots,z_{r},\{1\}^{n}}{\bk,\{1\}^{n}}\right)t^{n}\]
for an integer $\alpha$, an index $\bk\neq\emp$ and complex numbers $z_{1},\ldots,z_{r}$ ($r=\dep(\bk)$) satisfying some technical conditions ($=$ elements of $Q$). 
\begin{theorem}\label{thm:main}
    Let $z_{1},\ldots,z_{d}$ ($d\ge 1$) be elements of $Q$, $\alpha$ an integer and $\bk_{1},\ldots,\bk_{d}$ indices.
    Then we have
    \[\cL^{\star}_{\hcS,\alpha}\vv{z_{1},\{1\}^{\dep(\bk_{1})-1},\ldots,z_{d},\{1\}^{\dep(\bk_{d})-1}}{\bk_{1},\ldots,\bk_{d}}=\cL^{\star}_{\hcS,\alpha}\vv{1-z_{1},\{1\}^{\dep((\bk_{1})^{\vee})-1},\ldots,1-z_{d},\{1\}^{\dep((\bk_{d})^{\vee})-1}}{(\bk_{1})^{\vee},\ldots,(\bk_{d})^{\vee}}.\]
\end{theorem}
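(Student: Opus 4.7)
The plan is to imitate, in the symmetric setting, the iterated-integral proof of Theorem~\ref{thm:mpl_duality} that the paper establishes earlier. Because $\cL^{\star}_{\hcS,\alpha}$ is built out of ordinary complex multiple polylogarithms through the defining formula for $\pounds_{\hcS,\alpha}^{\sh}$ and the star/non-star conversion~\eqref{eq:star_nonstar}, the idea is to push the change of variable $x\mapsto 1-x$ powering Theorem~\ref{thm:mpl_duality} through the combined object $\cL^{\star}_{\hcS,\alpha}$.

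First I would collapse $\cL^{\star}_{\hcS,\alpha}$ into a signed finite sum of ordinary iterated integrals. Using Remark~\ref{rem:generating_function} with a formal variable $X$ satisfying $X/(1-X)=t$, the tail $\sum_{n\ge 0}(\cdots)t^{n}$ in the definition of $\cL^{\star}_{\hcS,\alpha}$ is essentially a rescaled $\pounds_{\hcS,\alpha}^{\sh,\star}$; expanding via~\eqref{eq:star_nonstar} and substituting the definition of $\pounds_{\hcS,\alpha}^{\sh}$, every summand becomes a product of two complex MPLs, each expressible as an iterated integral over $[0,1]$ whose one-forms are drawn from $\{dx/x,\, dx/(1-x),\, dx/(x-1/w)\}$ for parameters $w$ built explicitly from $z_{1},\ldots,z_{d}$ and $X$. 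Next I would apply $x\mapsto 1-x$ to every such integral. Exactly as in the proof of Theorem~\ref{thm:mpl_duality}, this swaps $dx/x$ with $dx/(1-x)$, transforms each parameter $w$ into its image under the induced involution, and reverses the order of the forms. After pushing this transformation through the rescaling $z_{j}\mapsto z_{j}/z_{i+1}$ implicit in $\pounds_{\hcS,\alpha}^{\sh}$, each block $\bk_{j}$ becomes $(\bk_{j})^{\vee}$ and each $z_{j}$ becomes $1-z_{j}$, while the global sign $(-1)^{d-1}$ of Theorem~\ref{thm:mpl_duality} is absorbed by the signed sum over $i$ defining $\pounds_{\hcS,\alpha}^{\sh}$. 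The role of the correction $-\tfrac{1}{2}\pounds_{\hcS,\alpha}^{\sh,\star}\vv{1,z_{2},\ldots}{\cdots}$ in $\cL^{\star}_{\hcS,\alpha}$ is to cancel the endpoint contribution at $z_{1}=1$ (and correspondingly $1-z_{1}=0$ on the dual side), in direct parallel with the analogous correction appearing in Theorem~\ref{thm:fmpl_duality}.

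The hard part will be the regularization bookkeeping. The MPLs produced in the previous step are evaluated at indices that need not be admissible and at tuples containing $1$'s, so they only make sense after shuffle regularization and only modulo $2\pi i$ — which is precisely why Theorem~\ref{thm:main} is stated in $(\cZ_{Q}/2\pi i\cZ_{Q})\jump{t}$. My plan is to carry out the change of variable at the formal level in the shuffle algebra of one-forms, so that the regularization choice cancels symbolically; one then has to check that the $\tfrac{1}{2}$-correction simultaneously kills the logarithmic endpoint divergence and the $2\pi i$-ambiguity along the deformed contour, and that the identification of the two power series in $t$ holds coefficient by coefficient within the technical constraints defining $Q$. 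A useful sanity check throughout is to specialise to $z_{i}=1$ and recover Theorem~\ref{thm:smzv_duality}.
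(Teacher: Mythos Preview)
Your proposal has a structural gap that the paper's argument is specifically designed to avoid. The object $\pounds_{\hcS,\alpha}^{\sh}$ is, by definition, a sum over a splitting point $i$ of \emph{products} of two complex MPLs, each factor rescaled by $z_{i+1}$. If you apply $x\mapsto 1-x$ separately to each factor, the parameters that appear are of the form $1-z_{j}/z_{i+1}$, not $(1-z_{j})/(1-z_{i+1})$; the substitution does not commute with the $z_{i+1}$-rescaling, so the transformed terms do not reassemble into $\pounds_{\hcS,\alpha}^{\sh}$ at the dual argument. Your sentence ``the global sign $(-1)^{d-1}$ \ldots\ is absorbed by the signed sum over $i$'' is exactly the step that would require a new idea, and nothing in the outline supplies one. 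Separately, your use of Remark~\ref{rem:generating_function} is a misreading: that remark packages a generating function in the \emph{truncation parameter} $N$ for the finite theory, whereas the variable $t$ in $\cL^{\star}_{\hcS,\alpha}$ counts \emph{appended $1$'s}; there is no substitution $X/(1-X)=t$ linking the two.

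The paper circumvents this by introducing the auxiliary quantity $\pounds_{\hcRS,\alpha}$ in~\eqref{eq:rsmpl}, a \emph{single} iterated integral along the loop $\beta=\dch\cdot c\cdot\dch^{-1}$, which by Proposition~\ref{prop:k} agrees with $\pounds_{\hcS,\alpha}$ modulo $2\pi i$. Because it is one integral rather than a sum of products, the contour can be deformed to the line $\eta_{u}\colon x\mapsto 1/(2u)+i(x^{-1}-(1-x)^{-1})$ (Proposition~\ref{prop:star_smpl_ii}); the tail $\sum_{n}(\cdots)t^{n}$ is then collapsed, not by a generating-function trick, but by the exponential identity of Corollary~\ref{cor:star_smpl_ii}, which replaces it by the single form $f_{1/u,1/u,t}-f_{0,0,t}$. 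The duality substitution is $z\mapsto 1/u-z$ (not $x\mapsto 1-x$), under which $\eta_{u}$ goes to $\eta_{u}^{-1}$; the $-\tfrac12$ correction turns the leading form into $e_{z_{1}/u}-(e_{1/u}+e_{0})/2$, which is literally invariant under that map, and the path reversal is handled via the complex-conjugation identity~\eqref{eq:conjugate} rather than Proposition~\ref{prop:basic_ii}(2) alone. None of these ingredients --- the single-loop integral model, the contour $\eta_{u}$, the substitution $z\mapsto 1/u-z$, or the conjugation step --- is visible from the sum-of-products definition you start with, and they are what actually makes the argument close.
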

Putting $Q=\{0,1\}$ and $d=1$ in the above equality, we get Theorem \ref{thm:smzv_duality}.
We prove Theorem \ref{thm:main} in Section \ref{sec:main}.
The next section is devoted to prove Theorems \ref{thm:mpl_duality} and \ref{thm:fmpl_duality} by the nature of iterated integrals.
\section*{Acknowledgements}
The author would like to thank Prof.~Minoru Hirose for kindly giving him the unpublished manuscript.
He is also grateful to Prof.~Sho Yoshikawa for careful reading of the manuscript.
\section{Iterated integrals and dualities of complex and finite MPLs}
\subsection{Review on iterated integrals}
For a commutaive ring $R$ and a set $S$, denote by $R\langle S\rangle$ the free $R$-algebra generated by $S$.
We say that $\gamma$ is a \emph{path} if $\gamma$ is a continuous piecewise smooth map $\gamma\colon[0,1]\to\bbP^{1}(\bbC)$.
The composition (resp.~reverse) of such paths is denoted by $(\gamma_{1},\gamma_{2})\mapsto\gamma_{1}\cdot\gamma_{2}$ (order in which they pass) (resp.~$\gamma\mapsto\gamma^{-1}$).
These operations give rise to that of their homotopy classes.
\begin{definition}
    Let $\gamma\colon[0,1]\to\bbP^{1}(\bbC)$ be a path and $a_{1},\ldots,a_{k}\in\bbC\setminus\gamma((0,1))$ such that $a_{0}\coloneqq\gamma(0)\neq a_{1}$ and $a_{k+1}\coloneqq\gamma(1)\neq a_{k}$.
    Then we define
    \[I_{\gamma}(a_{0};a_{1},\ldots,a_{k};a_{k+1})\coloneqq\int_{0<t_{1}<\cdots<t_{k}<1}\prod_{i=1}^{k}\frac{d\gamma(t_{i})}{\gamma(t_{i})-a_{i}}.\]
\end{definition}
Such iterated integrals are invariant under a homotopic deformation of paths, that is, $I_{\gamma}$ depends only on the homotopy class $\gamma$ represents.\\

Moreover, let us give the definition of regularized iterated integrals.
\begin{definition}[Regularized iterated integrals]
Take a sufficiently small $1/2>z>0$ and define a ``shortened'' path $\gamma_{z,1-z}$ by $t\mapsto\gamma((1-z)t+zt)$.
Then, for $a_{1},\ldots,a_{k}$ (possibly $a_{1}=a_{0}$ or $a_{k}=a_{k+1}$), there uniquely exists $P(x)\in\bbC[x]$ such that 
\[I_{\gamma_{z,1-z}}(a_{0};a_{1},\ldots,a_{k};a_{k+1})=P(\log z)+O(z(\log z)^{J}),\qquad \text{as }z\to+0.\]
We define $I_{\gamma}(e_{a_{1}}\cdots e_{a_{k}})\coloneqq P(0)$ and $\bbQ$-linearly extend it to $I_{\gamma}\colon\fH_{\gamma}\coloneqq\bbQ\langle e_{z}\mid z\in\bbC\setminus\gamma((0,1))\rangle\to\bbC$.
\end{definition}
We usually identify with the formal symbol $e_{a}$ with the $1$-form $(t-a)^{-1}dt$.
Note that
\begin{equation}\label{eq:transformation_dual}
    e_{a}(b+ct)=c\frac{dt}{(b+ct)-a}=e_{(a-b)/c}(t),
\end{equation}
and
\begin{equation}\label{eq:transformation_inverse}
    e_{a}(t^{-1})=-\frac{dt}{t^{2}(t^{-1}-a)}=\left(\frac{1}{(t-a^{-1})}-\frac{1}{t}\right)\,dt=(e_{1/a}-e_{0})(t)
\end{equation}
hold for $a,b\in\bbC$.
For (regularized) iterated integrals, the following formulas hold:
\begin{proposition}\label{prop:basic_ii}
    Let $\gamma$ and $\gamma'$ be composable paths.
    Then, for $a_{1},\ldots,a_{k}\in\bbC\setminus\gamma((0,1))$ and $w,w'\in\fH_{\gamma}$ such that the following iterated integrals exist, we have 
    \begin{enumerate}
        \item(Path composition formula) $\displaystyle I_{\gamma\cdot\gamma'}(e_{a_{1}}\cdots e_{a_{k}})=\sum_{i=0}^{k}I_{\gamma}(e_{a_{1}}\cdots e_{a_{i}})I_{\gamma'}(e_{a_{i+1}}\cdots e_{a_{k}})$.
        \item(Reversal formula) $I_{\gamma}(e_{a_{1}}\cdots e_{a_{k}})=(-1)^{k}I_{\gamma^{-1}}(e_{a_{k}}\cdots e_{a_{1}})$.
        \item(Shuffle product formula) $I_{\gamma}(w\sh w')=I_{\gamma}(w)I_{\gamma}(w')$.
    \end{enumerate} 
\end{proposition}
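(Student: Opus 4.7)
The plan is to prove each of the three identities first in the absolutely convergent case, where they are classical, and then to pass to the regularized setting via the shortened-path procedure that defines $I_{\gamma}$ on $\fH_{\gamma}$. The key observation enabling this bootstrap is that $\gamma_{z,1-z}$ is itself a piecewise-smooth path, so the convergent versions apply verbatim to it, producing polynomial identities in $\log z$; evaluating the constant term as $z\to +0$ then yields the regularized identities.

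For the path composition formula I would parametrize $\gamma\cdot\gamma'$ by $t\mapsto\gamma(2t)$ on $[0,1/2]$ and $t\mapsto\gamma'(2t-1)$ on $[1/2,1]$, and decompose the simplex $\{0<t_{1}<\cdots<t_{k}<1\}$ according to the number $i$ of $t_{j}$'s that fall in $[0,1/2]$. Rescaling the two halves by $u_{j}=2t_{j}$ and $u_{j}=2t_{j}-1$ respectively makes the $i$-th piece factor as $I_{\gamma}(e_{a_{1}}\cdots e_{a_{i}})\cdot I_{\gamma'}(e_{a_{i+1}}\cdots e_{a_{k}})$; summing over $0\le i\le k$ recovers the right-hand side.

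For the reversal formula I would substitute $s_{i}=1-t_{k+1-i}$ in the defining integral of $I_{\gamma^{-1}}(e_{a_{k}}\cdots e_{a_{1}})$, using $\gamma^{-1}(s)=\gamma(1-s)$. Each of the $k$ one-forms picks up a minus sign from the chain rule, the indices realign so that the form at position $i$ becomes $d\gamma(t_{i})/(\gamma(t_{i})-a_{i})$, and the simplex maps onto itself, producing the overall factor $(-1)^{k}$.

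For the shuffle product formula I would realize $I_{\gamma}(w)I_{\gamma}(w')$ as an integral over the product $\Delta_{k}\times\Delta_{l}$ of simplices (where $k$ and $l$ are the lengths of $w$ and $w'$). This product decomposes, up to a set of measure zero, into the disjoint union of the $\binom{k+l}{k}$ permuted simplices inside $\Delta_{k+l}$ indexed by $(k,l)$-shuffles, and on each piece the integrand rearranges into the single iterated-integral term labelled by that shuffle; summing recovers $I_{\gamma}(w\sh w')$. I expect the main obstacle to lie not in these three geometric decompositions but in the bookkeeping for the regularization: one must check that the shortening parameter $z$ propagates consistently through composition and shuffle (in particular, that one may shorten the two factors of $\gamma\cdot\gamma'$ symmetrically and still recover $I_{\gamma\cdot\gamma'}$ in the limit), and then verify that the resulting polynomial identities in $\log z$ survive evaluation at $0$. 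Once this endpoint accounting is in place, each of (1)--(3) follows by matching the constant terms of the polynomials produced by the convergent identities applied to $\gamma_{z,1-z}$.
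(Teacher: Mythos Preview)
The paper does not prove this proposition at all; it is stated without proof as a list of standard properties of (regularized) iterated integrals, and the paper simply uses them as known facts. Your proposal is the classical argument for these identities and is correct in outline, so there is nothing to compare against.
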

\subsection{Duality for complex MPLs}
For $X\in\bbC$, let $\dch_{0,X}$ (resp.~$\dch_{\infty,X}$) denote the path $t\mapsto Xt$ (resp.~$t\mapsto X/t$).
Especially we put $\dch\coloneqq\dch_{0,1}$.
Then $\fH_{\dch}$ is spanned over $\bbQ$ by $e_{a_{1}}\cdots e_{a_{k}}$ with $a_{1},\ldots,a_{k}\in\bbC\setminus(0,1)$.
The following proposition is immediately obtained by the expansion $(t-1/z_{i})^{-1}=-z_{i}(1+z_{i}t+z_{i}^{2}t^{2}+\cdots)$.
\begin{proposition}\label{prop:mpl_ii}
    For $v=\vv{z_{1},\ldots,z_{r}}{k_{1},\ldots,k_{r}}\in\cV$, we have
    \begin{equation}\label{eq:mpl_ii}
        \Li^{\sh}(v)=(-1)^{r}I_{\dch}(e_{1/z_{1}}e_{0}^{k_{1}-1}\cdots e_{1/z_{r}}e_{0}^{k_{r}-1}).
    \end{equation}
\end{proposition}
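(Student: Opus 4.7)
The identity follows from the calculation the statement already hints at. Writing $\dch(t)=t$, the iterated integral $I_{\dch}(e_{1/z_{1}}e_{0}^{k_{1}-1}\cdots e_{1/z_{r}}e_{0}^{k_{r}-1})$ is an integral over the simplex $\{0<t_{1}<\cdots<t_{k}<1\}$, $k=k_{1}+\cdots+k_{r}$, of a product of $1$-forms consisting of one factor $(t-1/z_{i})^{-1}dt$ at each position $k_{1}+\cdots+k_{i-1}+1$ followed by $k_{i}-1$ factors of $dt/t$. My first step is to substitute the geometric expansion
\[
\frac{1}{t-1/z_{i}}=-\sum_{m_{i}\ge 1}z_{i}^{m_{i}}\,t^{m_{i}-1}
\]
at each $e_{1/z_{i}}$ and interchange summation and integration. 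This is immediate when $|z_{i}|<1$ by absolute convergence on $(0,1)$; the boundary cases $|z_{i}|=1$ are handled either by dominated convergence (the convergence condition $(k_{r},z_{r})\neq(1,1)$ built into $\cV$ rules out the one genuinely singular configuration) or, more cleanly, by proving the formula first on the open unit polydisc and extending by continuity of both sides via Abel's theorem.

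After the swap, the problem reduces to evaluating iterated integrals of monomials, which I would carry out by induction from the innermost variable outward using the elementary identity $\int_{0}^{s}t^{a-1}\,dt=s^{a}/a$. The bookkeeping is captured by the claim that after processing the first $i$ blocks, the partial integral, regarded as a function of the next outer variable $t$, equals
\[
(-1)^{i}\,\frac{z_{1}^{m_{1}}\cdots z_{i}^{m_{i}}}{m_{1}^{k_{1}}(m_{1}+m_{2})^{k_{2}}\cdots(m_{1}+\cdots+m_{i})^{k_{i}}}\,t^{m_{1}+\cdots+m_{i}}.
\]
The induction step multiplies by $-z_{i+1}^{m_{i+1}}t^{m_{i+1}-1}\,dt$ and integrates, producing the factor $t^{m_{1}+\cdots+m_{i+1}}/(m_{1}+\cdots+m_{i+1})$; then $k_{i+1}-1$ applications of $\int t^{a-1}\,dt=t^{a}/a$ absorb the trailing $dt/t$ factors, each contributing another division by $m_{1}+\cdots+m_{i+1}$, which gives the formula at level $i+1$.

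My final step is to set the outermost variable equal to $1$ and reindex by $n_{i}\coloneqq m_{1}+\cdots+m_{i}$. Since $m_{i}=n_{i}-n_{i-1}$ with $n_{0}\coloneqq 0$, the constraint $m_{i}\ge 1$ becomes $0=n_{0}<n_{1}<\cdots<n_{r}$ and $z_{i}^{m_{i}}=z_{i}^{n_{i}-n_{i-1}}$, so the whole sum collapses to $(-1)^{r}\Li^{\sh}(v)$; multiplying through by $(-1)^{r}$ yields \eqref{eq:mpl_ii}. The only point requiring genuine care is the sum-integral swap on the boundary of the unit polydisc; everything else is routine bookkeeping.
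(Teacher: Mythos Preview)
Your argument is correct and is precisely the route the paper indicates: it states that the proposition ``is immediately obtained by the expansion $(t-1/z_{i})^{-1}=-z_{i}(1+z_{i}t+z_{i}^{2}t^{2}+\cdots)$'' without further detail, and your proposal simply spells out that computation. Your remark about handling $|z_{i}|=1$ by continuity/Abel's theorem is a sensible addition the paper leaves implicit.
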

\begin{remark}
    Whereas the element $\vv{z_{1},\ldots,z_{r}}{k_{1},\ldots,k_{r}}$ in $\cV$ is restricted as $|z_{1}|,\ldots,|z_{r}|\le 1$ and $(k_{r},z_{r})\neq (1,1)$, the right-hand side of \eqref{eq:mpl_ii} is defined whenever $z_{i}\in\bbC\setminus\bbR_{>1}$.
    From here, we use the expression \eqref{eq:mpl_ii} to define $\Li^{\sh}$ generally for such elements. 
\end{remark}
We present a proof of Theorem \ref{thm:mpl_duality} via the iterated integrals.
\begin{proof}[Proof of Theorem \ref{thm:mpl_duality} given in \cite{bbbl01}]
    By Theorem \ref{prop:mpl_ii} we have
    \[\Li^{\sh}\vv{\bz}{\bk}=(-1)^{\dep(\bk)}I_{\dch}(W(\bl_{1})e_{1}^{a_{1}-1}e_{1/w_{1}}e_{0}^{b_{1}-1}\cdots W(\bl_{d-1})e_{1}^{a_{d-1}-1}e_{1/w_{d-1}}e_{0}^{b_{d-1}-1}W(\bl_{d})),\]
    where $W(l_{1},\ldots,l_{s})\coloneqq e_{1}e_{0}^{l_{1}-1}\cdots e_{1}e_{0}^{l_{s}-1}$.
    Substituting $t\mapsto 1-t$ and applying \eqref{eq:transformation_dual} and Proposition \ref{prop:basic_ii} (2), we obtain
    \begin{align}
        &\Li^{\sh}\vv{\bz}{\bk}\\
        &=(-1)^{\dep(\bk)}I_{\dch}(W((\bl_{d})^{\dagger})e_{1}^{b_{d-1}-1}e_{(w_{d-1}-1)/w_{d-1}}e_{0}^{a_{d-1}-1}\cdots W((\bl_{2})^{\dagger})e_{1}^{b_{1}-1}e_{(w_{1}-1)/w_{1}}e_{0}^{a_{1}-1}W((\bl_{1})^{\dagger}))\\
        &=\Li^{\sh}\left(\vv{\bz}{\bk}^{\dagger}\right).
    \end{align}
\end{proof}
\subsection{Duality for FMPLs}
The original proof of Theorem \ref{thm:fmpl_duality} in \cite{seki19} is executed by some algebraic arguments and applying the $p$-adic expansion of binomial coefficients in \emph{Sakugawa--Seki's identity} \cite[Theorem 2.5]{ss17}
\begin{multline}\label{eq:ss}
    \sum_{0=n_{0}<n_{1}\le\cdots\le n_{r}<N}\frac{z_{1}^{n_{1}-n_{0}}\cdots z_{r}^{n_{r}-n_{r-1}}}{n_{1}^{k_{1}}\cdots n_{r}^{k_{r}}}\cdot (-1)^{n_{r}}\binom{N-1}{n_{r}}\\
    =\Li_{<N}^{\sh,\star}\vv{1-z_{1},\{1\}^{k_{1}-1},\ldots,1-z_{r},\{1\}^{k_{r}-1}}{\{1\}^{\wt(\bk)}}-\Li_{<N}^{\sh,\star}\vv{\{1\}^{k_{1}},1-z_{2},\{1\}^{k_{2}-1},\ldots,1-z_{r},\{1\}^{k_{r}-1}}{\{1\}^{\wt(\bk)}},
\end{multline}
which holds an index $\bk=(k_{1},\ldots,k_{r})$ with $r\ge 1$ in $\bbQ[z_{1},\ldots,z_{r}]$.
In this subsection, we give a proof of \eqref{eq:ss} using iterated integrals.
\begin{lemma}\label{lem:star_mpl_ii}
    For $\vv{z_{1},\ldots,z_{r}}{k_{1},\ldots,k_{r}}\in\cV$, we have
    \[\Li^{\sh,\star}\vv{z_{1},\ldots,z_{r}}{k_{1},\ldots,k_{r}}=(-1)^{\wt(\bk)}I_{\dch_{\infty,1}}((e_{z_{1}}-e_{0})e_{0}^{k_{1}-1}e_{z_{2}}e_{0}^{k_{2}-1}\cdots e_{z_{r}}e_{0}^{k_{r}-1}).\]
\end{lemma}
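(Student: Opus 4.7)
The plan is to pull the iterated integral on $\dch_{\infty,1}$ back to one on $\dch$ via the substitution $u=1/t$, and then to expand the resulting word and recognize the total as $\Li^{\sh,\star}$ through \eqref{eq:star_nonstar}. The rule \eqref{eq:transformation_inverse} gives $e_0\leftrightarrow -e_0$ and $e_a\leftrightarrow e_{1/a}-e_0$ for $a\ne 0$, so under the pullback the grouped factor $e_{z_1}-e_0$ telescopes to the single letter $e_{1/z_1}$, each $e_{z_j}$ with $j\ge 2$ becomes $e_{1/z_j}-e_0$, and each explicit $e_0$ contributes $-1$. Collecting the $\wt(\bk)-r$ signs produced by the $e_0^{k_j-1}$ blocks, I would first establish
\[I_{\dch_{\infty,1}}((e_{z_1}-e_0)e_0^{k_1-1}e_{z_2}e_0^{k_2-1}\cdots e_{z_r}e_0^{k_r-1})=(-1)^{\wt(\bk)-r}I_{\dch}(e_{1/z_1}e_0^{k_1-1}(e_{1/z_2}-e_0)e_0^{k_2-1}\cdots (e_{1/z_r}-e_0)e_0^{k_r-1}).\]

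Next, I would expand the product $\prod_{j=2}^{r}(e_{1/z_j}-e_0)$ by distributivity. For each subset $S\subseteq\{2,\ldots,r\}$, writing $\{1\}\cup S=\{j_0<j_1<\cdots<j_d\}$, the resulting word on $\dch$ after merging adjacent $e_0$'s has exactly the shape in Proposition \ref{prop:mpl_ii} with variables $z_{j_0},\ldots,z_{j_d}$ and block sums $K_a=k_{j_{a-1}}+\cdots+k_{j_a-1}$ (with $K_{d+1}=k_{j_d}+\cdots+k_r$). The subset contributes a sign $(-1)^{r-1-d}$ from its $-e_0$ choices, and Proposition \ref{prop:mpl_ii} supplies $(-1)^{d+1}$, so every term acquires the uniform sign $(-1)^r$. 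Reindexing by $i_a=j_a-1$ identifies the resulting sum with the right-hand side of \eqref{eq:star_nonstar}, giving $(-1)^r\Li^{\sh,\star}\vv{z_1,\ldots,z_r}{k_1,\ldots,k_r}$; multiplying by the earlier $(-1)^{\wt(\bk)-r}$ produces the claimed overall factor $(-1)^{\wt(\bk)}$.

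The point that genuinely needs attention is convergence. At the $t=0$ end of $\dch_{\infty,1}$, the forms $e_{z_1}$ and $e_0$ individually have logarithmic singularities, but the combination $e_{z_1}-e_0$ equals $-z_1\,dt/(1-z_1t)$, which is bounded, so the grouping is exactly what rescues integrability there. At the $t=1$ end the rightmost form is $e_0$ when $k_r\ge 2$ and $e_{z_r}$ with $z_r\ne 1$ when $k_r=1$, both integrable under the standing assumption $(k_r,z_r)\ne(1,1)$ from $\cV$; each sub-word emerging in the expansion on $\dch$ inherits convergence for the same reasons. Once this is secured, the whole argument reduces to careful sign bookkeeping and a reindexing of subsets, which I expect to be the main combinatorial step requiring care rather than an analytic difficulty.
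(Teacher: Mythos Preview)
Your proposal is correct and follows essentially the same route as the paper: the paper first expands $\Li^{\sh,\star}$ via \eqref{eq:star_nonstar} and Proposition \ref{prop:mpl_ii} into the single $\dch$-integral $I_{\dch}((-e_{1/z_1})e_0^{k_1-1}(e_0-e_{1/z_2})\cdots(e_0-e_{1/z_r})e_0^{k_r-1})$ and then applies the substitution $t\mapsto 1/t$, whereas you run the same two steps in the opposite order. Your added discussion of convergence at the endpoints is fine but not strictly needed here, since the paper's iterated integrals are defined via the regularized limit, which absorbs the potential logarithmic divergences you isolate.
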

\begin{proof}
    From \eqref{eq:star_nonstar} and Proposition \ref{prop:mpl_ii}, we have
    \begin{align}
        \Li^{\sh,\star}\vv{z_{1},\ldots,z_{r}}{k_{1},\ldots,k_{r}}
        &=\sum_{d=0}^{r-1}\sum_{0=i_{0}<\cdots<i_{d+1}=r}\Li^{\sh}\vv{z_{i_{0}+1},\ldots,z_{i_{d}+1}}{k_{i_{0}+1}+\ldots+k_{i_{1}},\ldots,k_{i_{d}+1}+\cdots+k_{i_{d+1}}}\\
        &=\sum_{d=0}^{r-1}\sum_{0=i_{0}<\cdots<i_{d+1}=r}(-1)^{d+1}I_{\dch}(e_{1/z_{i_{0}+1}}e_{0}^{k_{i_{0}+1}+\cdots+k_{i_{1}}}\cdots e_{1/z_{i_{d}+1}}e_{0}^{k_{i_{d}+1}+\cdots+k_{i_{d+1}}})\\
        &=\sum_{d=0}^{r-1}\sum_{0=i_{0}<\cdots<i_{d+1}=r}I_{\dch}(A_{1}e_{0}^{k_{1}-1}\cdots A_{r}e_{0}^{k_{r}-1}),
    \end{align}
    where
    \[A_{j}=\begin{cases}
        -e_{1/z_{j}} & \text{if }j=i_{c}+1\text{ for some }0\le c\le d,\\
        e_{0} & \text{otherwise.}
    \end{cases}\]
    Thus we have
    \[
        \Li^{\sh,\star}\vv{z_{1},\ldots,z_{r}}{k_{1},\ldots,k_{r}}=I_{\dch}((-e_{1/z_{1}})e_{0}^{k_{1}-1}(e_{0}-e_{1/z_{2}})e_{0}^{k_{2}-1}\cdots (e_{0}-e_{1/z_{r}})e_{0}^{k_{r}-1}).
    \]
    Then consider substitution $t\mapsto 1/t$: by \eqref{eq:transformation_inverse}, equalities $e_{1/z_{1}}(t^{-1})=(e_{z_{1}}-e_{0})(t)$, $e_{0}(t^{-1})=-e_{0}(t)$ and $(e_{0}-e_{1/z_{i}})(t^{-1})=-e_{z_{i}}(t)$ hold.
    Therefore we obtain the desired result.
\end{proof}
\begin{proof}[Proof of \eqref{eq:ss}]
    It suffices to prove \eqref{eq:ss} for complex numbers $z_{1},\ldots,z_{r}$ satisfying $|z_{i}|\le 1$ and $|1-z_{i}|\le 1$ ($1\le i\le r$).
    Consider the generating functions of both sides of \eqref{eq:ss}.
    For the left-hand side, we have
    \begin{align}
        &\sum_{N=1}^{\infty}\left(\sum_{0=n_{0}<n_{1}\le\cdots\le n_{r}<N}\frac{z_{1}^{n_{1}-n_{0}}\cdots z_{r}^{n_{r}-n_{r-1}}}{n_{1}^{k_{1}}\cdots n_{r}^{k_{r}}}\cdot (-1)^{n_{r}}\binom{N-1}{n_{r}}\right)X^{N}\\
        &=\sum_{\substack{0=n_{0}<n_{1}\le\cdots\le n_{r}\\ N\ge 0}}\frac{z_{1}^{n_{1}-n_{0}}\cdots z_{r}^{n_{r}-n_{r-1}}}{n_{1}^{k_{1}}\cdots n_{r}^{k_{r}}}\cdot (-1)^{n_{r}}\cdot\binom{N+n_{r}}{n_{r}}X^{N+n_{r}+1}\\
        &=\sum_{0=n_{0}<n_{1}\le\cdots\le n_{r}}\frac{z_{1}^{n_{1}-n_{0}}\cdots z_{r}^{n_{r}-n_{r-1}}}{n_{1}^{k_{1}}\cdots n_{r}^{k_{r}}}\cdot (-1)^{n_{r}}X(1-X)^{-n_{r}-1}\\
        &=\frac{X}{1-X}\sum_{0=n_{0}<n_{1}\le\cdots\le n_{r}}\prod_{i=1}^{r}\left(\frac{Xz_{i}}{X-1}\right)^{n_{i}-n_{i-1}}\frac{1}{n_{i}^{k_{i}}}\\
        &=\frac{X}{1-X}\Li^{\sh,\star}\vv{Xz_{1}/(X-1),\ldots,Xz_{r}/(X-1)}{k_{1},\ldots,k_{r}},
\end{align}
for $X\in\bbC$ with $0<\Re(X)<1/2$.
Since Lemma \ref{lem:star_mpl_ii} holds, substituting $t\mapsto (X-t)/(X-1)$, we have
\begin{align}
    &\sum_{N=1}^{\infty}\left(\sum_{0=n_{0}<n_{1}\le\cdots\le n_{r}<N}\frac{z_{1}^{n_{1}-n_{0}}\cdots z_{r}^{n_{r}-n_{r-1}}}{n_{1}^{k_{1}}\cdots n_{r}^{k_{r}}}\cdot (-1)^{n_{r}}\binom{N-1}{n_{r}}\right)X^{N}\\
    &=\frac{X}{1-X}(-1)^{\wt(\bk)}I_{\dch_{\infty,1}}((e_{Xz_{1}/(X-1)}-e_{0})e_{0}^{k_{1}-1}e_{Xz_{2}/(X-1)}e_{0}^{k_{2}-1}\cdots e_{Xz_{r}/(X-1)}e_{0}^{k_{r}-1})\\
    &=\frac{X}{1-X}(-1)^{\wt(\bk)}I_{\dch_{\infty,1}}((e_{X(1-z_{1})}-e_{X})e_{X}^{k_{1}-1}e_{X(1-z_{2})}e_{X}^{k_{2}-1}\cdots e_{X(1-z_{r})}e_{X}^{k_{r}-1})\\
    &=\begin{multlined}[t]
        \frac{X}{1-X}\left(\Li^{\sh,\star}\vv{X(1-z_{1}),\{X\}^{k_{1}-1},\ldots,X(1-z_{r}),\{X\}^{k_{r}-1}}{\{1\}^{\wt(\bk)}}\right.\\
        \left.-\Li^{\sh,\star}\vv{\{X\}^{k_{1}},X(1-z_{2}),\{X\}^{k_{2}-1},\ldots,X(1-z_{r}),\{X\}^{k_{r}-1}}{\{1\}^{\wt(\bk)}}\right)\end{multlined}\\
    &=\begin{multlined}[t]
        \sum_{N=1}^{\infty}\left(\Li^{\sh,\star}_{<N}\vv{1-z_{1},\{1\}^{k_{1}-1},\ldots,1-z_{r},\{1\}^{k_{r}-1}}{\{1\}^{\wt(\bk)}}\right.\\
        \left.-\Li^{\sh,\star}_{<N}\vv{\{1\}^{k_{1}},1-z_{2},\{1\}^{k_{2}-1},\ldots,1-z_{r},\{1\}^{k_{r}-1}}{\{1\}^{\wt(\bk)}}\right)X^{N}
    \end{multlined}
\end{align}
where we used $e_{X(1-z_{1})}-e_{X}=(e_{X(1-z_{1})}-e_{0})-(e_{X}-e_{0})$ in the third equality and Remark \ref{rem:generating_function} in the last equality.
\end{proof}
\section{Proof of the main theorem}\label{sec:main}
In this section, we fix an integer $\alpha$ and a finite subset $Q$ of $\bbC$ satisfying
\begin{enumerate}
    \item $\{0,1\}\subseteq Q$,
    \item $Q\cap(0,1)=\emp$,
    \item $\Re(u)>0$ and $|u-1/4|>1/4$ for all $u\in Q^{\times}$,
    \item $2\Re(u/v)<\Re(v)$ for all $u\in Q$ and $v\in Q^{\times}$ if $u\neq v$.
\end{enumerate}
Here $Q^{\times}$ denotes $Q\setminus\{0\}$. To such a $Q$, we associate the set $\cZ_{Q}\coloneqq I_{\dch}(\bbQ\langle e_{a/b}\mid a\in Q,~b\in Q^{\times}\rangle)\subseteq\bbR$ on which Proposition \ref{prop:basic_ii} (3) induces a $\bbQ$-algebra structure. Here and in what follows, we always regard $e_{\infty}$ as $0$.
Note that $\cZ_{\{0,1\}}$ is the $\bbQ$-algebra $\cZ$ appearing in \S\ref{sec:intro}.

Let $c$ be a path which circles $1$ counterclockwise satisfying
\[\lim_{t\to+0}\frac{c(t)-1}{t}=-1,\qquad\text{and}\qquad\lim_{t\to 1-0}\frac{c(t)-1}{t-1}=1.\]
Put $\beta\coloneqq\dch\cdot c\cdot\dch^{-1}$. 
\begin{definition}
    Let $z_{1},\ldots,z_{r}$ be elements of $Q$, and $\bk=(k_{1},\ldots,k_{r})$ an index.
    We define
    \begin{equation}\label{eq:rsmpl}
        \pounds_{\hcRS,\alpha}\vv{z_{1},\ldots,z_{r}}{k_{1},\ldots,k_{r}}\coloneqq\frac{(-1)^{r}}{2\pi i}\sum_{u\in Q^{\times}}u^{\alpha}\sum_{n=0}^{\infty}I_{\beta}(e_{u/z_{1}}e_{0}^{k_{1}-1}\cdots e_{u/z_{r}}e_{0}^{k_{r}-1}e_{u}e_{0}^{n})(-t)^{n}\in\bbC\jump{t},
    \end{equation}
    and
    \[\pounds_{\hcRS,\alpha}^{\star}\vv{z_{1},\ldots,z_{r}}{k_{1},\ldots,k_{r}}\coloneqq\sum_{d=0}^{r-1}\sum_{0=i_{0}<\cdots<i_{d+1}=r}\pounds_{\hcRS,\alpha}\vv{z_{i_{0}+1},\ldots,z_{i_{d}+1}}{k_{i_{0}+1}+\ldots+k_{i_{1}},\ldots,k_{i_{d}+1}+\cdots+k_{i_{d+1}}}.\]
\end{definition}
\begin{proposition}[{\cite[Proposition 4.17 (3)]{kawamura22}}]\label{prop:k}
    In the above settings, $\pounds_{\hcRS,\alpha}\vv{z_{1},\ldots,z_{r}}{k_{1},\ldots,k_{r}}$ belongs to $\cZ_{Q}[2\pi i]\jump{t}$ and coincides with $\pounds_{\hcS,\alpha}\vv{z_{1},\ldots,z_{r}}{k_{1},\ldots,k_{r}}$ modulo $2\pi i\cZ_{Q}[2\pi i]\jump{t}$.
\end{proposition}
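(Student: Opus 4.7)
The plan is to apply path composition (Proposition~\ref{prop:basic_ii}~(1)) to $\beta = \dch \cdot c \cdot \dch^{-1}$, identify the contribution of order $2\pi i$ from the middle loop $c$, and show it reproduces $\pounds_{\hcS,\alpha}^{\sh}$ (and hence $\pounds_{\hcS,\alpha}$ modulo $2\pi i$), while contributions of order $(2\pi i)^{m}$ for $m \ge 2$ land in $2\pi i\cZ_{Q}[2\pi i]\jump{t}$. The first technical input is a computation of iterated integrals along $c$: parametrizing $c$ by a tangentially regularized counter-clockwise loop $1 + \varepsilon e^{2\pi i t}$ and taking $\varepsilon \to 0$, one finds $I_{c}(e_{a_{1}} \cdots e_{a_{m}}) = (2\pi i)^{m}/m!$ if every $a_{j}$ equals $1$ and $0$ otherwise, because each factor $e_{a}$ with $a \ne 1$ contributes a $1$-form vanishing with $\varepsilon$. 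Modulo $(2\pi i)^{2}$ only $I_{c}(\emp) = 1$ and $I_{c}(e_{1}) = 2\pi i$ remain.

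For fixed $u \in Q^{\times}$ and $n \ge 0$, write $W_{u,n} \coloneqq e_{u/z_{1}} e_{0}^{k_{1}-1} \cdots e_{u/z_{r}} e_{0}^{k_{r}-1} e_{u} e_{0}^{n}$. Path composition gives $I_{\beta}(W_{u,n}) = \sum I_{\dch}(W') I_{c}(W'') I_{\dch^{-1}}(W''')$ over splittings $W_{u,n} = W' W'' W'''$. The contribution with $W'' = \emp$ equals $I_{\dch \cdot \dch^{-1}}(W_{u,n}) = 0$ since $\dch \cdot \dch^{-1}$ is null-homotopic (the vanishing can also be checked directly from the shuffle relation of Proposition~\ref{prop:basic_ii}~(3)). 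Hence modulo $(2\pi i)^{2}$ only splittings with $W'' = e_{1}$ matter, i.e.\ exactly those where the single letter of $W''$ is one of the letters of $W_{u,n}$ equal to $e_{1}$: these are the $e_{u/z_{i+1}}$ with $z_{i+1} = u$ for $0 \le i < r$, and the trailing $e_{u}$ (when $u = 1$), which I identify with $i = r$ and $z_{r+1} = 1$. For such a split, Proposition~\ref{prop:mpl_ii} evaluates the left factor to $(-1)^{i} \Li^{\sh}\vv{z_{1}/u,\ldots,z_{i}/u}{k_{1},\ldots,k_{i}}$, and the reversal formula followed by Proposition~\ref{prop:mpl_ii} evaluates the right factor to $(-1)^{r-i} \cdot (-1)^{\sum_{j>i} k_{j} + n}$ times a regularized $\Li^{\sh}$-value of the reversed right word. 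A direct sign count with the $(-1)^{r}$ and $(-t)^{n}$ prefactors yields $(-1)^{k_{i+1}+\cdots+k_{r}} t^{n}$, matching the sign in $\pounds_{\hcS,\alpha}^{\sh}$.

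The reversed right word starts with $e_{0}^{n}$, so the resulting iterated integral is regularized. To turn $\sum_{n} t^{n} I_{\dch}(e_{0}^{n} \cdot V)$, with $V = e_{u} e_{0}^{k_{r}-1} e_{u/z_{r}} \cdots e_{u/z_{i+2}} e_{0}^{k_{i+1}-1}$, into the triple sum $\sum_{n_{i+1},\ldots,n_{r} \ge 0} \bigl(\prod_{j>i} \binom{k_{j}+n_{j}-1}{n_{j}}\bigr) \Li^{\sh}(\ldots; k_{r}+n_{r},\ldots,k_{i+1}+n_{i+1}) t^{\sum n_{j}}$ of $\pounds_{\hcS,\alpha}^{\sh}$, I will use the series form of $\Li^{\sh}$ together with the generating identity $\sum_{m \ge 0} \binom{k+m-1}{m}(t/n)^{m} = (1-t/n)^{-k}$ applied at each summation index $n_{j}$ to redistribute the leading $e_{0}^{n}$'s as exponent-shifts $k_{j}-1 \mapsto k_{j}+n_{j}-1$ in $V$. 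Once this matching is done, identifying $u$ with $z_{i+1}$, summing over $u \in Q^{\times}$ and $i$, and noting that $I_{\dch}$-values lie in $\cZ_{Q}$ while $I_{c}$-values are rational multiples of powers of $2\pi i$, yield both membership $\pounds_{\hcRS,\alpha} \in \cZ_{Q}[2\pi i]\jump{t}$ and the congruence with $\pounds_{\hcS,\alpha}$ modulo $2\pi i\cZ_{Q}[2\pi i]\jump{t}$. The main obstacle will be justifying the generating-function redistribution rigorously: the iterated integrals involved require regularization at $0$, so commuting the $n$-summation with the regularization limit and verifying that exponent-shifting is precisely what the regularization produces is the delicate step.
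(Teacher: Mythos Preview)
The paper does not supply its own proof of this proposition: it is quoted verbatim from \cite[Proposition~4.17~(3)]{kawamura22} and used as a black box, so there is no argument in the present text to compare yours against.

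That said, your outline is the natural and essentially standard route, and is very likely what the cited reference does. The decomposition of $I_{\beta}$ via $\beta=\dch\cdot c\cdot\dch^{-1}$ and Proposition~\ref{prop:basic_ii}~(1), together with the computation $I_{c}(e_{a_{1}}\cdots e_{a_{m}})=(2\pi i)^{m}/m!$ if all $a_{j}=1$ and $0$ otherwise, is correct. Your observation that the sum over $u\in Q^{\times}$ collapses, for each splitting position $i$, to the single value $u=z_{i+1}$ (and thus reproduces both the factor $z_{i+1}^{\alpha}$ and the restriction $z_{i+1}\neq 0$ in the definition of $\pounds_{\hcS,\alpha}^{\sh}$) is the key matching step, and it is right. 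The membership claim $\pounds_{\hcRS,\alpha}\in\cZ_{Q}[2\pi i]\jump{t}$ also follows immediately from your path decomposition, since every $I_{\dch}$ and $I_{\dch^{-1}}$ factor lies in $\cZ_{Q}$ and every $I_{c}$ factor is a rational multiple of a power of $2\pi i$.

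The step you flag as delicate is in fact a routine shuffle-regularization identity. Since $I_{\dch}(e_{0}^{n})=0$ for $n\ge 1$, applying $I_{\dch}$ to the expansion of $e_{0}^{n}\sh V$ gives a recursion whose closed form is
\[
I_{\dch}(e_{0}^{n}e_{a_{1}}e_{0}^{m_{1}-1}\cdots e_{a_{s}}e_{0}^{m_{s}-1})
=(-1)^{n}\sum_{\substack{n_{1},\ldots,n_{s}\ge 0\\ n_{1}+\cdots+n_{s}=n}}\prod_{l=1}^{s}\binom{m_{l}+n_{l}-1}{n_{l}}\,I_{\dch}(e_{a_{1}}e_{0}^{m_{1}+n_{1}-1}\cdots e_{a_{s}}e_{0}^{m_{s}+n_{s}-1}),
\]
which, combined with Proposition~\ref{prop:mpl_ii}, is exactly the binomial-weighted sum in the definition of $\pounds_{\hcS,\alpha}^{\sh}$. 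No limit interchange is required: the regularization is algebraic once $I_{\dch}(e_{0}^{n})=0$ is used. So there is no genuine obstacle, only a computation to spell out.
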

Moreover, from an argument similar to \cite[p.~19]{kawamura22},
we see that
\begin{equation}\label{eq:conjugate}
    \frac{(-1)^{r}}{2\pi i}\sum_{u\in Q^{\times}}u^{\alpha}\sum_{n=0}^{\infty}I_{\beta^{-1}}(e_{u/z_{1}}e_{0}^{k_{1}-1}\cdots e_{u/z_{r}}e_{0}^{k_{r}-1}e_{u}e_{0}^{n})(-t)^{n}=\overline{\pounds_{\hcRS,\alpha}\vv{\overline{z_{1}},\ldots,\overline{z_{r}}}{k_{1},\ldots,k_{r}}}
\end{equation}
holds, where the longest overline on the right-hand side means the coefficientwise complex conjugate.
\begin{proposition}\label{prop:star_smpl_ii}
    Let $z_{1},\ldots,z_{r}$ be elements of $Q^{\times}$ and $(k_{1},\ldots,k_{r})$ be an index.
    For $u\in Q^{\times}$, let $\eta_{u}$ be the path $t\mapsto 1/2u+i(t^{-1}-(1-t)^{-1})$.
    Then we have
    \[\pounds_{\hcRS,\alpha}^{\star}\vv{z_{1},\ldots,z_{r}}{k_{1},\ldots,k_{r}}=\frac{(-1)^{\wt(\bk)}}{2\pi i}\sum_{u\in Q^{\times}}u^{\alpha}I_{\eta_{u}}((e_{z_{1}/u}-e_{0})e_{0}^{k_{1}-1}e_{z_{2}/u}e_{0}^{k_{2}-1}\cdots e_{z_{r}/u}e_{0}^{k_{r}-1}(f_{1/u,0,t}-f_{0,0,t})),\]
    where $f_{a,b,t}$ is the symbol identified with the $1$-form $(x-a)^{-1}\exp(t\log(x-b))\,dx$ and the branch of $\log(\eta_{u}(x)-u^{-1})$ is taken to be what satisfies
    \[\lim_{x\to 1}(\log(\eta_{u}(x)-u^{-1})-\log(\eta_{u}(x)))=0.\]
\end{proposition}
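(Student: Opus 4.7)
The plan is to mirror the proof of Lemma \ref{lem:star_mpl_ii} in the regularized symmetric setting. First, I will expand $\pounds_{\hcRS,\alpha}^{\star}$ by its definition as a sum over compositions $0=i_{0}<i_{1}<\cdots<i_{d+1}=r$ of $\pounds_{\hcRS,\alpha}$-terms and substitute \eqref{eq:rsmpl} into each summand. The signs $(-1)^{d+1}$ attached to the depth-$(d+1)$ subterms, together with the merging pattern of the exponents $K_{c}=k_{i_{c}+1}+\cdots+k_{i_{c+1}}$, collapse exactly as in the proof of Lemma \ref{lem:star_mpl_ii} into a single iterated integral $I_{\beta}(W_{u}\cdot e_{u}e_{0}^{n})$, where
\[
W_{u}\coloneqq(-e_{u/z_{1}})e_{0}^{k_{1}-1}\prod_{j=2}^{r}(e_{0}-e_{u/z_{j}})e_{0}^{k_{j}-1}.
\]

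Next, I will convert the generating series $\sum_{n\ge 0}(-t)^{n}e_{u}e_{0}^{n}$ into a single regularized 1-form. Integrating out the trailing $e_{0}^{n}$'s produces $(-\log\gamma(\cdot))^{n}/n!$ along a path $\gamma$, and summing the resulting exponential series yields $\exp(t\log\gamma(\cdot))$, so that formally
\[
\sum_{n\ge 0}(-t)^{n}I_{\gamma}(w\cdot e_{u}e_{0}^{n})=I_{\gamma}(w\cdot f_{u,0,t});
\]
this is precisely the manipulation carried out in the proof of \eqref{eq:ss}. Applied with $\gamma=\beta$ under the tangential regularization at its endpoint $0$, it gives
\[
\pounds_{\hcRS,\alpha}^{\star}\vv{\bz}{\bk}=\frac{1}{2\pi i}\sum_{u\in Q^{\times}}u^{\alpha}\,I_{\beta}\bigl(W_{u}\cdot f_{u,0,t}\bigr).
\]

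Finally, I will apply the M\"obius change of variables $x\mapsto 1/x$. Via \eqref{eq:transformation_inverse}, each $e_{a}$ becomes $e_{1/a}-e_{0}$: the leading $-e_{u/z_{1}}$ turns into $(e_{z_{1}/u}-e_{0})$ after pulling a sign; each $(e_{0}-e_{u/z_{j}})$ collapses to $-e_{z_{j}/u}$ with another sign; the middle $e_{0}$'s each flip sign; and a direct computation of the transformed 1-form converts $f_{u,0,t}$ into $f_{1/u,0,t}-f_{0,0,t}$. Collecting all these signs produces the global factor $(-1)^{\wt(\bk)}$ of the statement. The image of $\beta$ under the inversion is then to be identified, via homotopy inside $\bbC\setminus\{0,z_{j}/u,1/u:j\}$, with the contour $\eta_{u}$; this homotopy is available because conditions (1)--(4) on $Q$ were set up precisely so that $\eta_{u}$ avoids all singularities of the integrand and lies on the correct side of every $z_{j}/u$ and of $1/u$.

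The main obstacle is this last step: matching the inverted image of $\beta$ with $\eta_{u}$ while simultaneously reconciling the branches of $\log$ used in the two regularized iterated integrals. The branch prescription $\lim_{x\to 1}(\log(\eta_{u}(x)-u^{-1})-\log\eta_{u}(x))=0$ is exactly the condition under which the transformed form $f_{1/u,0,t}-f_{0,0,t}$ admits the same regularized value at the two infinite endpoints of $\eta_{u}$ as $f_{u,0,t}$ does at the tangential basepoint of $\beta$; checking that this asymptotic identification is the correct one, and that the deformation from the inverted $\beta$ to $\eta_{u}$ does not cross any $z_{j}/u$ or $1/u$, is the technical heart of the argument.
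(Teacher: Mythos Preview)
Your approach is essentially the paper's: package the tail $\sum_{n\ge 0}(-t)^{n}e_{u}e_{0}^{n}$ into a single regularized form, run the star-collapse argument of Lemma~\ref{lem:star_mpl_ii}, apply the inversion $x\mapsto 1/x$, and then homotope the inverted $\beta$ (which the paper writes as the path $\gamma$ from $\infty$ around $1$ and back) to $\eta_{u}$. One small slip: the regularized tail along $\beta$ is $f_{u,0,-t}$, not $f_{u,0,t}$ (the paper's asymptotic computation gives $\exp(-t\log\beta(z))$ after dropping the $\log(1-z')$ terms), and it is $f_{u,0,-t}$ whose pullback under $x\mapsto 1/x$ equals $f_{1/u,0,t}-f_{0,0,t}$.
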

\begin{proof}
    For $0<z<z'<1$ and a non-negative integer $n$, we have
    \begin{align}
        \sum_{n=0}^{\infty}\frac{1}{\beta(z)-u}\int_{z<x_{1}<\cdots<x_{n}<z'}\frac{d\beta(x_{1})}{\beta(x_{1})}\cdots\frac{d\beta(x_{n})}{\beta(x_{n})}(-t)^{n}
        &=\frac{1}{\beta(z)-u}\sum_{n=0}^{\infty}\frac{1}{n!}\left(\int_{z}^{z'}\frac{d\beta(x)}{\beta(x)}\right)^{n}(-t)^{n}\\
        &=\frac{1}{\beta(z)-u}\exp(t\log\beta(z')-t\log\beta(z)).
    \end{align}
    Its asymptotic expansion as $z'\to 1-0$ take the form
       \[\frac{1}{\beta(z)-u}\exp(t\log\beta(z')-t\log\beta(z))=\frac{1}{\beta(z)-u}\exp(-t\log\beta(z))+\sum_{i=1}^{\mu}a_{i}(\log (1-z'))^{i}+O((1-z')\log(1-z')^{J})\]
        for some $\mu\in\bbZ_{\ge 0}$, $a_{1},\ldots,a_{\mu}\in\bbC$ and $J>0$.
    Thus we obtain
    \begin{align}
        \pounds_{\hcRS,\alpha}\vv{z_{1},\ldots,z_{r}}{k_{1},\ldots,k_{r}}
        &=\frac{(-1)^{r}}{2\pi i}\sum_{u\in Q^{\times}}u^{\alpha}I_{\beta}(e_{u/z_{1}}e_{0}^{k_{1}-1}\cdots e_{u/z_{r}}e_{0}^{k_{r}-1}f_{u,0,-t}).
    \end{align}
    Similarly to the proof of Lemma \ref{lem:star_mpl_ii}, we have
    \begin{align}
        \pounds_{\hcRS,\alpha}^{\star}\vv{z_{1},\ldots,z_{r}}{k_{1},\ldots,k_{r}}
        &=\frac{(-1)^{\wt(\bk)}}{2\pi i}\sum_{u\in Q^{\times}}u^{\alpha}I_{\gamma}((e_{z_{1}/u}-e_{0})e_{0}^{k_{1}-1}\cdots e_{z_{r}/u}e_{0}^{k_{r}-1}(f_{1/u,0,t}-f_{0,0,t})),
    \end{align}
    where $\gamma$ is the path that first runs on the real axis from $\infty$ to $1$, then counterclockwise around $1$, and then back on the real axis from $1$ to $\infty$.
    Since we can deform $\gamma$ to $\eta_{u}$ continuously on $\bbP^{1}(\bbC)\setminus\{a/b\mid a\in Q,~b\in Q^{\times}\}$, we get the desired assertion.
\end{proof}
\begin{remark}
    By the definition of $\pounds_{\hcRS,\alpha}^{\star}$, the equality
    \[\pounds_{\hcRS,\alpha}^{\star}\vv{z_{1},\ldots,z_{r},0,w_{1},\ldots,w_{s}}{k_{1},\ldots,k_{r},a,l_{1},\ldots,l_{s}}=\pounds_{\hcRS,\alpha}^{\star}\vv{z_{1},\ldots,z_{r},w_{1},\ldots,w_{s}}{k_{1},\ldots,k_{r-1},k_{r}+a,l_{1},\ldots,l_{s}}\]
    holds.
    Thus Proposition \ref{prop:star_smpl_ii} actually holds even if $0\in\{z_{1},\ldots,z_{r}\}$.
\end{remark}
\begin{corollary}\label{cor:star_smpl_ii}
    For an index $(k_{1},\ldots,k_{r})$ and $z_{1},\ldots,z_{r}\in Q$, we have
    \[\sum_{n=0}^{\infty}\pounds_{\hcRS,\alpha}^{\star}\vv{z_{1},\ldots,z_{r},\{1\}^{n}}{k_{1},\ldots,k_{r},\{1\}^{n}}t^{n}=\frac{(-1)^{\wt(\bk)}}{2\pi i}\sum_{u\in Q^{\times}}u^{\alpha}I_{\eta_{u}}((e_{z_{1}/u}-e_{0})e_{0}^{k_{1}-1}\cdots e_{z_{r}/u}e_{0}^{k_{r}-1}(f_{1/u,1/u,t}-f_{0,0,t})).\]
\end{corollary}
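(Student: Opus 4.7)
The plan is to apply Proposition \ref{prop:star_smpl_ii} to every summand on the left and then reduce the corollary to a form-level identity under $I_{\eta_u}$. To lighten notation I set $g(x):=\log(\eta_u(x)-1/u)$ and $h(x):=\log\eta_u(x)$, so that $e_{1/u},e_0,f_{1/u,0,t},f_{0,0,t},f_{1/u,1/u,t}$ correspond to $dg,dh,dg\cdot e^{th},dh\cdot e^{th},dg\cdot e^{tg}$ respectively, and the prescribed branch of $\log(\eta_u-1/u)$ translates into the statement that $g(x)-h(x)\to 0$ as $x\to 1^{-}$.

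The first step is to expand each summand by Proposition \ref{prop:star_smpl_ii}: since every extra pair $(z_{r+j},k_{r+j})=(1,1)$ contributes a single letter $e_{1/u}$ to the word, one obtains
\[\pounds_{\hcRS,\alpha}^{\star}\vv{z_1,\ldots,z_r,\{1\}^n}{k_1,\ldots,k_r,\{1\}^n}=\frac{(-1)^{\wt(\bk)+n}}{2\pi i}\sum_{u\in Q^{\times}}u^{\alpha}\,I_{\eta_u}(W\cdot e_{1/u}^n\cdot(f_{1/u,0,t}-f_{0,0,t})),\]
where $W:=(e_{z_1/u}-e_0)e_0^{k_1-1}\cdots e_{z_r/u}e_0^{k_r-1}$. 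Multiplying by $t^n$ and summing absorbs the sign into $(-1)^{\wt(\bk)}(-t)^n$, so the corollary is reduced to proving, for each $u\in Q^{\times}$,
\[\sum_{n=0}^{\infty}(-t)^n\,I_{\eta_u}(W\cdot e_{1/u}^n\cdot(f_{1/u,0,t}-f_{0,0,t}))=I_{\eta_u}(W\cdot(f_{1/u,1/u,t}-f_{0,0,t})).\]

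To verify this identity I would perform the integration over the tail variables that sit between $W$ and the final form. Write $\sigma$ for the last variable of $W$ and $y_1<\cdots<y_n<z<1$ for the $n+1$ subsequent variables. Evaluating the inner simplex integral of $dg^n(dg-dh)e^{th}$ gives $\int_{\sigma}^{1}(dg-dh)e^{th}\cdot(g(z)-g(\sigma))^n/n!$; summing against $(-t)^n$ folds the $y$'s into the exponential $\exp(-t(g(z)-g(\sigma)))$ and leaves $e^{tg(\sigma)}\int_{\sigma}^{1}(dg-dh)e^{t(h-g)}$. Since $(dg-dh)e^{t(h-g)}=-t^{-1}d(e^{t(h-g)})$ and $e^{t(h-g)(1)}=1$ by the branch condition, this tail collapses to $t^{-1}(e^{th(\sigma)}-e^{tg(\sigma)})$. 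On the right-hand side the tail is $\int_{\sigma}^{1}(dg\,e^{tg}-dh\,e^{th})$; each integrand is a total derivative, and the boundary values at $z=1$ cancel thanks to $g(1)=h(1)$, yielding exactly the same $t^{-1}(e^{th(\sigma)}-e^{tg(\sigma)})$.

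Since the outer integration of $W$ over $0<s_1<\cdots<s_{\wt(\bk)}=\sigma<1$ is identical on both sides, the required identity follows. The main subtlety is that $\int_{\sigma}^{1}dg\,e^{tg}$ and $\int_{\sigma}^{1}dh\,e^{th}$ are individually divergent as $z\to 1$ and only their difference is a well-defined formal power series in $t$; the prescribed branch of $\log(\eta_u-1/u)$ is precisely what makes these two divergences cancel, and carrying out this cancellation inside the regularized iterated-integral formalism is the one step that requires honest bookkeeping.
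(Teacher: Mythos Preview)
Your argument is correct and follows the same route as the paper: apply Proposition~\ref{prop:star_smpl_ii} to each summand, sum the resulting $e_{1/u}^{n}$ tail into an exponential, and collapse the remaining one-variable integral via the total derivative $(dg-dh)e^{t(h-g)}=-t^{-1}\,d(e^{t(h-g)})$ together with the branch condition $g(1)=h(1)$. Your careful bookkeeping of the sign $(-1)^{\wt(\bk)+n}$, which turns the outer $t^{n}$ into $(-t)^{n}$, is exactly what makes the total-derivative step work, and the regularization caveat you flag at the end is handled in the paper at the same informal level.
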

\begin{proof}
    Take $0<z<z'<1$ and $u\in Q^{\times}$.
    Then we have
    \begin{align}
        \sum_{n=0}^{\infty}\int_{z<x_{1}<\cdots<x_{n}<z'}\frac{d\eta_{u}(x_{1})}{\eta_{u}(x_{1})-u^{-1}}\cdots\frac{d\eta_{u}(x_{n})}{\eta_{u}(x_{n})-u^{-1}}t^{n}
        &=\sum_{n=0}^{\infty}\frac{1}{n!}\left(\int_{z}^{z'}\frac{d\eta_{u}(x)}{\eta_{u}(x)-u^{-1}}\right)^{n}t^{n}\\
        &=\exp(t\log(\eta_{u}(z')-u^{-1})-t\log(\eta_{u}(z)-u^{-1})),
    \end{align}
    and integrating it by $z'$ we obtain
    \begin{align}
        &\sum_{n=0}^{\infty}\int_{z<z'<1}\left(\int_{z<x_{1}<\cdots<x_{n}<z'}\frac{d\eta_{u}(x_{1})}{\eta_{u}(x_{1})-u^{-1}}\cdots\frac{d\eta_{u}(x_{n})}{\eta_{u}(x_{n})-u^{-1}}\right)(f_{1/u,0,t}-f_{0,0,t})(\eta_{u}(z'))t^{n}\\
        &=\int_{z<z'<1}\left(\frac{1}{\eta(z')-u^{-1}}-\frac{1}{\eta(z')}\right)\exp\left(t\log(\eta_{u}(z')-u^{-1})+t\log\eta_{u}(z')-t\log(\eta_{u}(z)-u^{-1})\right)\,d\eta_{u}(z')\\
        &=-\exp(t\log(\eta_{u}(z)-u^{-1}))\int_{z<z'<1}\left(\frac{d}{d\eta_{u}(x)}\frac{1}{t}\exp(t\log\eta(z')-t\log(\eta(z')-u^{-1}))\right)\,d\eta_{u}(z')\\
        &=\exp(t\log(\eta_{u}(z)-u^{-1}))\cdot\frac{1}{t}\left(\exp(t\log\eta(z)-t\log(\eta(z)-u^{-1}))-1\right)\\
        &=\frac{1}{t}\left(\exp(t\log\eta(z))-\exp(t\log(\eta(z)-u^{-1}))\right)\\
        &=\int_{z<z'<1}(f_{1/u,1/u,t}-f_{0,0,t})(z'),
    \end{align}
    where we formally defined $(f-1)/t$ for $f\in 1+t\bbC\jump{t}$.
    By combining this and Proposition \ref{prop:star_smpl_ii}, the desired formula is shown.
\end{proof}
Define
    \[\cL_{\hcRS,\alpha}^{\star}\vv{z_{1},\ldots,z_{r}}{k_{1},\ldots,k_{r}}\coloneqq e^{\pi it/2}\sum_{n=0}^{\infty}\left(\pounds_{\hcRS,\alpha}^{\star}\vv{z_{1},\ldots,z_{r},\{1\}^{n}}{k_{1},\ldots,k_{r},\{1\}^{n}}-\frac{1}{2}\pounds_{\hcRS,\alpha}^{\star}\vv{1,z_{2},\ldots,z_{r},\{1\}^{n}}{k_{1},\ldots,k_{r},\{1\}^{n}}\right)t^{n}\]
    for $z_{1},\ldots,z_{r}\in Q$ and an index $(k_{1},\ldots,k_{r})\neq\emp$.
The following is the main theorem of this paper.
\begin{theorem}\label{thm:rsmpl_duality}
    Let $z_{1},\ldots,z_{d}$ ($d\ge 1$) be elements of $Q$ and $\bk_{1},\ldots,\bk_{d}$ indices.
    Then we have
    \[
        \cL^{\star}_{\hcRS,\alpha}\vv{z_{1},\{1\}^{\dep(\bk_{1})-1},\ldots,z_{d},\{1\}^{\dep(\bk_{d})-1}}{\bk_{1},\ldots,\bk_{d}}
        =\overline{\cL^{\star}_{\hcRS,\alpha}\vv{1-\overline{z_{1}},\{1\}^{\dep((\bk_{1})^{\vee})-1},\ldots,1-\overline{z_{d}},\{1\}^{\dep((\bk_{d})^{\vee})-1}}{(\bk_{1})^{\vee},\ldots,(\bk_{d})^{\vee}}}.
    \]
\end{theorem}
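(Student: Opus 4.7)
The plan is to apply Corollary \ref{cor:star_smpl_ii} to both sides and derive the equality from the reflection $\phi\colon w\mapsto u^{-1}-w$, which satisfies $\phi\circ\eta_u=\eta_u^{-1}$ because $\eta_u(1-x)=u^{-1}-\eta_u(x)$. This parallels the proof of Theorem \ref{thm:mpl_duality} recalled in Section 2, where the substitution $t\mapsto 1-t$ on $\dch$ swapped $e_0$ and $e_1$.

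First I rewrite $\cL^\star_{\hcRS,\alpha}$ via Corollary \ref{cor:star_smpl_ii}. The coefficients $1$ and $-\tfrac{1}{2}$ in its definition package the leading letter of the iterated integral into the symmetric combination $v_{z_1}\coloneqq e_{z_1/u}-\tfrac{1}{2}e_{1/u}-\tfrac{1}{2}e_0$, so the left-hand side of Theorem \ref{thm:rsmpl_duality} takes the form
\[\frac{e^{\pi it/2}(-1)^{\wt(\bk)}}{2\pi i}\sum_{u\in Q^\times}u^{\alpha}\,I_{\eta_u}\bigl(v_{z_1}W(\bk_1)e_{z_2/u}W(\bk_2)\cdots e_{z_d/u}W(\bk_d)(f_{1/u,1/u,t}-f_{0,0,t})\bigr),\]
where $W(\bk)=e_0^{k_1-1}e_{1/u}e_0^{k_2-1}\cdots e_{1/u}e_0^{k_r-1}$ for $\bk=(k_1,\ldots,k_r)$, and $\bk$ abbreviates the concatenation $\bk_1\cdots\bk_d$.

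Next I pull back by $\phi$. The $1$-forms transform as $\phi^*e_a=e_{u^{-1}-a}$ (so $v_{z_1}\mapsto v_{1-z_1}$ and $e_{z_i/u}\mapsto e_{(1-z_i)/u}$) and $\phi^*f_{a,b,t}=e^{-i\pi t}f_{u^{-1}-a,\,u^{-1}-b,\,t}$, with the sign $-i\pi$ forced by the branch prescription of Proposition \ref{prop:star_smpl_ii} because $\eta_u$ lies in the lower half-plane near $x=1$. Consequently $f_{1/u,1/u,t}-f_{0,0,t}$ acquires the factor $-e^{-i\pi t}$. Combining the pullback identity $I_{\eta_u^{-1}}(w)=I_{\eta_u}(\phi^*w)$ with Proposition \ref{prop:basic_ii}\,(2) yields
\[I_{\eta_u}(w)=(-1)^{|w|}I_{\eta_u}\bigl(\mathrm{rev}(\phi^*w)\bigr),\]
which applied to the word above rewrites the LHS as (a constant multiple of) $I_{\eta_u}$ of the reversed-and-swapped word, with each $z_i$ replaced by $1-z_i$ and the accumulated scalar $-e^{-i\pi t}$ pulled from the $f$-letter. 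The reversed-and-swapped middle blocks $e_{1/u}^{k_{i,r_i}-1}e_0\cdots e_0e_{1/u}^{k_{i,1}-1}$ are then reformatted into the dualized $W(\bk_i^\vee)$ by the combinatorial identity $(l_s,\ldots,l_1+1)=(k_1,\ldots,k_r+1)^\dagger$ defining $\bk_i^\vee$, with the stray $e_0$ at each boundary absorbed into the preceding index by the remark after Proposition \ref{prop:star_smpl_ii}. Using $|w|=\wt(\bk)+1$ and $\wt(\bk^\vee)=\wt(\bk)$, the signs recombine to the expected prefactor $(-1)^{\wt(\bk^\vee)}$.

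Finally, the accumulated phase $e^{\pi it/2}\cdot(-e^{-i\pi t})=-e^{-\pi it/2}=-\overline{e^{\pi it/2}}$, combined with the analog of \eqref{eq:conjugate} expressing the $\eta_u^{-1}$-integral as a complex conjugate of the $\eta_u$-integral with conjugated variables, yields precisely the outer complex conjugation on the right-hand side; the variables $1-\overline{z_i}$ appear because the conjugate is taken at the conjugated variables and $\overline{1-z_i}=1-\overline{z_i}$. The main obstacle will be the careful branch and sign bookkeeping for $\phi^*f_{a,b,t}$: matching the branch $\log(-z)=\log z-i\pi$ with the prescription of Proposition \ref{prop:star_smpl_ii} requires tracking $\arg(\eta_u(x)-u^{-1})$ along $\eta_u$. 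A secondary difficulty is the combinatorial reformatting across the marker letters $e_{z_i/u}$: swap-and-reverse of $W(\bk)$ does not immediately return a word of the standard shape $W(\bk^\vee)$, and the boundary identity from the remark after Proposition \ref{prop:star_smpl_ii} must be applied at each block transition to reassemble the dualized word.
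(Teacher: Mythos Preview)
Your proposal assembles the right ingredients---Corollary \ref{cor:star_smpl_ii}, the reflection $\phi\colon w\mapsto u^{-1}-w$ satisfying $\phi\circ\eta_u=\eta_u^{-1}$, the branch computation for $\phi^*f_{a,b,t}$, and the conjugation relation \eqref{eq:conjugate}---but the reversal step is both unnecessary and the source of a genuine error. The claim that the reversed-and-swapped block $e_{1/u}^{k_{i,r_i}-1}e_0\cdots e_0\,e_{1/u}^{k_{i,1}-1}$ equals $W(\bk_i^\vee)$ is false: the operation $\vee$ is designed precisely so that swapping $e_0\leftrightarrow e_{1/u}$ \emph{alone} sends $\omega_u(\bk_i)$ to $\omega_u(\bk_i^\vee)$ (one checks $e_{1/u}\,\omega_u(\bk)\,e_0$ is the admissible word for $(k_1,\dots,k_r{+}1)$, whose MZV dual is $e_{1/u}\,\tau\sigma\omega_u(\bk)\,e_0$, and comparing with the definition of $\vee$ gives $\sigma\omega_u(\bk)=\omega_u(\bk^\vee)$). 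Applying reversal on top yields $\omega_u$ of the \emph{reversed} index, and moreover the blocks $\bk_1,\dots,\bk_d$ end up in reverse order with the $f$-letter at the front and the symmetric letter $v_{z_1}$ at the back---the opposite of the shape recognised by Corollary \ref{cor:star_smpl_ii}. You then invoke the analog of \eqref{eq:conjugate} for an ``$\eta_u^{-1}$-integral,'' but after your reversal you are back on $\eta_u$ and there is no such integral present; the argument does not close as written.

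The paper's proof avoids this entirely by \emph{not} reversing. One uses the change of variable $z\mapsto 1/u-z$ to rewrite $I_{\eta_u}(w)$ as $I_{\eta_u^{-1}}(\sigma w)$ with the letters in the \emph{same} order (each $e_a$ replaced by $e_{1/u-a}$, and the $f$-letter transformed with the phase $e^{-\pi it}$). Then $\sigma\omega_u(\bk_i)=\omega_u(\bk_i^\vee)$ directly, the markers $e_{z_i/u}$ become $e_{(1-z_i)/u}$ in place, and the $f$-letter stays at the end; this is exactly the integrand for the analogue $\tilde{\cL}^\star$ defined with $\eta_u^{-1}$ instead of $\eta_u$ and $e^{-\pi it/2}$ instead of $e^{\pi it/2}$. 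The conjugation identity \eqref{eq:conjugate} (whose $\eta_u$-analog relates the $\eta_u^{-1}$-integral to the complex conjugate of the $\eta_u$-integral with conjugated parameters) then converts $\tilde{\cL}^\star$ into the complex conjugate on the right-hand side and produces the $1-\overline{z_i}$. If you delete the reversal step and keep the rest of your outline, you recover exactly this argument.
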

\begin{proof}
    Put $k\coloneqq\wt(\bk_{1})+\cdots+\wt(\bk_{d})$.
    Use the notation $\omega_{u}(l_{1},\ldots,l_{s})\coloneqq e_{0}^{l_{1}-1}e_{1/u}\cdots e_{0}^{l_{s-1}-1}e_{1/u}e_{0}^{l_{s}-1}$ for $u\in Q^{\times}$ and an index $(l_{1},\ldots,l_{s})\neq\emp$.
    From Corollary \ref{cor:star_smpl_ii} we obtain
    \begin{multline}
        e^{-\pi it/2}\cL_{\hcRS,\alpha}^{\star}\vv{w_{1},\ldots,w_{r}}{k_{1},\ldots,k_{r}}\\
        =\frac{(-1)^{k}}{2\pi i}\sum_{u\in Q^{\times}}u^{\alpha}I_{\eta_{u}}((e_{w_{1}/u}-(e_{1/u}+e_{0})/2)e_{0}^{k_{1}-1}e_{w_{2}/u}e_{0}^{k_{2}-1}\cdots e_{w_{r}/u}e_{0}^{k_{r}-1}(f_{1/u,1/u,t}-f_{0,0,t}))
    \end{multline}
    for $w_{1},\ldots,w_{r}\in Q$ and $k_{1},\ldots,k_{r}\in\bbZ_{\ge 1}$.
    Therefore it follows that
    \begin{align}
        &e^{-\pi it/2}\cL_{\hcRS,\alpha}^{\star}\vv{z_{1},\{1\}^{\dep(\bk_{1})-1},\ldots,z_{r},\{1\}^{\dep(\bk_{d})-1}}{\bk_{1},\ldots,\bk_{d}}\\
        &=\frac{(-1)^{k}}{2\pi i}\sum_{u\in Q^{\times}}u^{\alpha}I_{\eta_{u}}((e_{z_{1}/u}-(e_{1/u}+e_{0})/2)\omega_{u}(\bk_{1})e_{z_{2}/u}\omega_{u}(\bk_{2})\cdots e_{z_{d}/u}\omega_{u}(\bk_{d})(f_{1/u,1/u,t}-f_{0,0,t})).
    \end{align}
    Substituting $z\mapsto 1/u-z$ and using \eqref{eq:transformation_dual}, we obtain
    \begin{align}
        &e^{-\pi it/2}\cL_{\hcRS,\alpha}^{\star}\vv{z_{1},\{1\}^{\dep(\bk_{1})-1},\ldots,z_{r},\{1\}^{\dep(\bk_{d})-1}}{\bk_{1},\ldots,\bk_{d}}\\
        &=\begin{multlined}[t]
            \frac{(-1)^{k}}{2\pi i}e^{-\pi it}\sum_{u\in Q^{\times}}u^{\alpha}\\
        \cdot I_{\eta_{u}^{-1}}((e_{(1-z_{1})/u}-(e_{0}+e_{1/u})/2)\omega_{u}((\bk_{1})^{\vee})e_{(1-z_{2})/u}\omega_{u}((\bk_{2})^{\vee})\cdots e_{(1-z_{d})/u}\omega_{u}((\bk_{d})^{\vee})(-f_{0,0,t}+f_{1/u,1/u,t}))
        \end{multlined}\\
        &=e^{-\pi it/2}\tilde{\cL}_{\hcRS,\alpha}^{\star}\vv{1-z_{1},\{1\}^{\dep((\bk_{1})^{\vee})-1},\ldots,1-z_{r},\{1\}^{\dep((\bk_{d})^{\vee})-1}}{(\bk_{1})^{\vee},\ldots,(\bk_{d})^{\vee}},
    \end{align}
    where $\tilde{\cL}$ is defined similarly as $\cL$ except for that $\eta_{u}$ (resp.~$e^{\pi it/2}$) is replaced by $\eta_{u}^{-1}$ (resp.~$e^{-\pi it/2}$).
    Thus we get the theorem by this equation and \eqref{eq:conjugate}.
\end{proof}
Theorem \ref{thm:main} follows from Theorem \ref{thm:rsmpl_duality} and Proposition \ref{prop:k}.


\begin{thebibliography}{BBBL}
    \bibitem[BBBL]{bbbl01} J.~M.~Borwein, D.~M.~Bradley, D.~K.~Broadhurst and P.~Lison\v{e}k, \emph{Special values of multiple polylogarithms}, Trans.~Amer.~Math.~Soc.~\textbf{353} (2001), 907--941.
    \bibitem[K]{kawamura22} H.~Kawamura, \emph{Iterated integrals associated with colored rooted tree}, preprint, \texttt{arXiv:2209.13293}.
    \bibitem[KMS]{kms22} H.~Kawamura, T.~Maesaka and S.~Seki, \emph{Multivariable connected sums and multiple polylogarithms}, Res.~Mah.~Sci.~\textbf{9} (2022), art.~4.
    \bibitem[OSY]{osy21} M.~Ono, S.~Seki and S.~Yamamoto, \emph{Truncated $t$-adic symmetric multiple zeta values and double shuffle relations}, Res. Number Theory \textbf{7} (2021), art.~15.
    \bibitem[SS]{ss17} K.~Sakugawa and S.~Seki, \emph{On functional equations of finite multiple polylogarithms}, J.~Algebra \textbf{469} (2017), 323--357.
    \bibitem[S]{seki19} S.~Seki, \emph{The $\bsp$-adic duality for the finite star-multiple polylogarithms}, Tohoku Math.~J.~(2) \textbf{71} (2019), 111--122.
    \bibitem[TT]{tt23} Y.~Takeyama and K.~Tasaka, \emph{Supercongruences of multiple harmonic $q$-sums and generalized finite/symmetric multiple zeta values}, Kyushu J.~Math.~\textbf{77} (2023), 75--120.
\end{thebibliography}
\end{document}